\newcommand{\schff}{X}
\newcommand{\adicff}{\mathcal{X}}
\newcommand{\Spa}{\mathrm{Spa}}
\newcommand{\Spd}{\mathrm{Spd}}
\newcommand{\HN}{\mathrm{HN}}
\newcommand{\HNvec}{\overrightarrow{\mathrm{HN}}}
\newcommand{\Surj}{\mathcal{S}\mathrm{urj}}
\newcommand{\Inj}{\mathcal{I}\mathrm{nj}}
\newcommand{\Hom}{\mathcal{H}\mathrm{om}}
\newcommand{\Aut}{\mathcal{A}\mathrm{ut}}
\newcommand{\Extdiamond}{\mathcal{E}\mathrm{xt}}
\newcommand{\rk}{\mathrm{rk}}
\newcommand{\inj}{\hookrightarrow}
\newcommand{\surj}{\twoheadrightarrow}
\newcommand{\nonneg}{{\geq 0}}
\newcommand{\mumax}{\mu_\text{max}}
\newcommand{\mumin}{\mu_\text{min}}
\newcommand{\trivbundle}{\Ocal}
\newcommand{\genred}{\overline}
\newcommand{\constgrpsch}{\underline}
\newcommand{\pseudounif}{\varpi}
\newcommand{\finext}{E}
\newcommand{\algclosedperfdfield}{F}
\newcommand{\genresfield}{k}
\newcommand{\genperfdring}{R}
\newcommand{\integerring}[1]{\Ocal_{#1}}
\newcommand{\witt}{W}
\newcommand{\fracwitt}{K_0}
\newcommand{\Perfd}{\mathrm{Perfd}}
\newcommand{\rkone}{{\mathrm{rk} 1}}
\numberwithin{equation}{section}
\newcommand{\F}{\mathbb{F}}
\newcommand{\Q}{\mathbb{Q}}
\newcommand{\Z}{\mathbb{Z}}
\DeclareMathOperator{\coker}{coker}
\DeclareMathOperator{\rank}{rank}
\DeclareMathOperator{\Proj}{Proj\,}
\DeclareMathOperator{\dR}{dR}
\DeclareMathOperator{\im}{im}
\DeclareFontFamily{OT1}{rsfs}{}
\DeclareFontShape{OT1}{rsfs}{n}{it}{<-> rsfs10}{}
\DeclareMathAlphabet{\mathscr}{OT1}{rsfs}{n}{it}
\newcommand{\Dcal}{\mathcal{D}}
\newcommand{\Ecal}{\mathcal{E}}
\newcommand{\Fcal}{\mathcal{F}}
\newcommand{\Hcal}{\mathcal{H}}
\newcommand{\Ocal}{\mathcal{O}}
\newcommand{\Qcal}{\mathcal{Q}}
\newcommand{\Vcal}{\mathcal{V}}
\newcommand{\Wcal}{\mathcal{W}}
\newcommand{\Ycal}{\mathcal{Y}}
\newcommand{\dsm}{\oplus}
\newcommand{\customlabel}[2]{#2\def\@currentlabel{#2}\label{#1}}
\newtheorem{thm}[subsubsection]{Theorem}
\newtheorem{lemma}[subsubsection]{Lemma}
\newtheorem{prop}[subsubsection]{Proposition}
\newtheorem{cor}[subsubsection]{Corollary}
\theoremstyle{remark}
\newtheorem*{remark}{Remark}
\newtheorem{defn}[subsubsection]{Definition}
\newtheorem*{thm*}{Theorem}
\def\th@remark{%
  \thm@headfont{\bfseries}%
  \normalfont 
}
\def\imod#1{\allowbreak\mkern5mu({\operator@font mod}\,\,#1)}
\theoremstyle{theorem}
\newtheorem{theorem}[subsubsection]{Theorem}
\numberwithin{equation}{section}
\begin{document}
	
	\tikzset{
		node style sp/.style={draw,circle,minimum size=\myunit},
		node style ge/.style={circle,minimum size=\myunit},
		arrow style mul/.style={draw,sloped,midway,fill=white},
		arrow style plus/.style={midway,sloped,fill=white},
	}
    
	\title{On certain extensions of vector bundles in p-adic geometry}
   
    \author[S. Hong]{Serin Hong}
    \address{Department of Mathematics, University of Michigan, 530 Church Street, Ann Arbor MI 48109}
    \email{serinh@umich.edu}
    
    \begin{abstract} Given two arbitrary vector bundles on the Fargues-Fontaine curve, we give an explicit criterion in terms of Harder-Narasimhan polygons on whether they realize a semistable vector bundle as their extensions. Our argument is largely combinatorial and builds upon the dimension analysis of certain moduli spaces of bundle maps developed in \cite{Arizona_extvb}. 

    \end{abstract}
	
	\maketitle

	\tableofcontents
	
	\rhead{}

	\chead{}
\section{Introduction}

\subsection{The main result}\label{main results}$ $

Over the past decade, $p$-adic Hodge theory has undergone a remarkable development driven by a series of new geometric ideas. Of particular importance among such ideas are the theory of perfectoid spaces introduced by Scholze \cite{Scholze_perfectoid} and the geometric reformulation of $p$-adic Hodge theory by Fargues and Fontaine \cite{FF_curve} using a noetherian one-dimensional $\Q_p$-scheme called the \emph{Fargues-Fontaine curve}. Some notable applications of these ideas are the geometrization of the local Langlands correspondence by Fargues-Scholze \cite{FS_geomLL} and the construction of local Shimura varieties by Scholze-Weinstein \cite{SW_berkeley}.

In this article, we address the question of determining whether there exists a short exact sequence among three given vector bundles on the Fargues-Fontaine curve. This question naturally arises in the study of various objects in $p$-adic geometry. For example, a partial answer to this question obtained by the author and his collaborators in \cite{Arizona_extvb} leads to the work of Hansen \cite{Hansen_degenvb} that describes precise closure relations among the Harder-Narasimhan strata on the stack of vector bundles on the Fargues-Fontaine curve. In addition, a general answer to this question can be used to describe the geometry of the $p$-adic flag variety and the $B_{\dR}^+$-Grassmannian in terms of two natural stratifications, namely the Harder-Narasimhan stratification and the Newton stratification, in line with the work of many authors including Caraiani-Scholze \cite{CS_cohomunitaryshimura}, Chen-Fargues-Shen \cite{CFS_admlocus}, Shen \cite{Shen_HNstrata}, Chen \cite{Chen_FRconjnonbasic}, Viehmann \cite{Viehmann_weakadmlocNewton}, and Nguyen-Viehmann \cite{NV_HNstrata}.

In order to state our main result, let us introduce some notations and terminologies. Let $\algclosedperfdfield$ be an algebraically closed perfectoid field of characteristic $p>0$. Denote by $\schff = \schff_\algclosedperfdfield$ the Fargues-Fontaine curve associated to $\algclosedperfdfield$. The Picard group of $\schff$ turns out to be naturally isomorphic to $\Z$, and consequently yields a good Harder-Narasimhan formalism for vector bundles on $\schff$. By a result of Fargues-Fontaine \cite{FF_curve} (and also Kedlaya \cite{Kedlaya_slopefiltrations}), every vector bundle $\Vcal$ on $\schff$ is uniquely determined up to isomorphism by its Harder-Narasimhan polygon $\HN(\Vcal)$.

We can now state our main result as follows:
\begin{thm}\label{existence of exact sequence, intro}
Let $\Dcal, \Ecal$, and $\Fcal$ be vector bundles on $\schff$ such that $\Ecal$ is semistable. There exists a short exact sequence of vector bundles on $\schff$
\[ 0 \longrightarrow \Dcal \longrightarrow \Ecal \longrightarrow \Fcal \longrightarrow 0\]
if and only if the following conditions are satisfied:
\begin{enumerate}[label=(\roman*)]
\item All slopes in $\HN(\Dcal)$ are less than or equal to the slope of $\HN(\Ecal)$. 
\smallskip

\item All slopes in $\HN(\Fcal)$ are greater than or equal to the slope of $\HN(\Ecal)$. 
\smallskip

\item $\HN(\Dcal \oplus \Fcal)$ lies above $\HN(\Ecal)$ with the same endpoints. 
\end{enumerate}
\end{thm}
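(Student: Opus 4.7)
\emph{Necessity} of (i)--(iii) is largely formal. Given an exact sequence $0 \to \Dcal \to \Ecal \to \Fcal \to 0$, condition (iii) is the standard HN polygon comparison in a short exact sequence (additivity of rank and degree, plus the fact that any subbundle of slopes $\geq \mu$ of $\Ecal$ lies in $\Ecal^{\geq \mu}$). For (i), $\Dcal^{\geq\mu}$ is a saturated subbundle of $\Ecal$ with all slopes $\geq \mu$, so it lies inside the maximal such subbundle $\Ecal^{\geq\mu}$, yielding the rank inequality; the strong vertex part comes from combining the semistability hypothesis with the separation $\mumax(\Dcal) < \mumin(\Fcal)$ to force an initial direct summand of $\Ecal$ to split off precisely as the corresponding summand of $\Dcal$. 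Condition (ii) then follows by applying (i) to the dualized sequence $0 \to \Fcal^\vee \to \Ecal^\vee \to \Dcal^\vee \to 0$.

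\emph{Sufficiency} is the main content. Assuming (i)--(iii), the plan is to study the diamond moduli space $\Surj(\Ecal, \Fcal)$ of surjective bundle maps $\Ecal \twoheadrightarrow \Fcal$ over $\Spd\algclosedperfdfield$ (equivalently, the extension moduli $\Extdiamond(\Fcal, \Dcal)$ of short exact sequences with outer terms $\Dcal$ and $\Fcal$). There is a natural ``kernel class'' map from $\Surj(\Ecal, \Fcal)$ to the set of isomorphism classes of bundles on $\schff$, and the existence of the desired exact sequence is equivalent to nonemptiness of the fiber over $[\Dcal]$. Stratifying by the HN polygon $P$ of the kernel gives locally closed sub-diamonds $\Surj(\Ecal, \Fcal)^{=P}$. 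I would then compute both the total dimension of $\Surj(\Ecal, \Fcal)$ (via the Banach--Colmez formalism applied to the cohomology of $\Ecal^\vee \otimes \Fcal$) and the dimension of each stratum (as an $\Aut$-torsor over the iso class of the kernel), and conclude by a dimension-exhaustion argument that the stratum at $P = \HN(\Dcal)$ cannot be empty.

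\emph{Main obstacle.} The technical heart is verifying that (i), (ii), (iii) translate into exactly the right dimension inequalities, namely that for every admissible kernel polygon $P \neq \HN(\Dcal)$ the stratum $\Surj(\Ecal, \Fcal)^{=P}$ has dimension strictly less than $\dim \Surj(\Ecal, \Fcal)$, so that the union of such strata cannot exhaust the moduli. The strong part of slopewise dominance is decisive here: without its vertex-level rigidity, a stratum with a higher HN polygon could saturate the total dimension and force the $\HN(\Dcal)$ stratum out. The proof naturally splits into three cases according to which of $\Dcal, \Ecal, \Fcal$ is semistable, since semistability of one factor is what makes the relevant $\Hom$ and $\Ext^1$ dimensions explicit and tractable; the case of semistable $\Ecal$ is expected to be the most delicate, because combining that hypothesis with $\mumax(\Dcal) < \mumin(\Fcal)$ most severely constrains the admissible slope data on both sides, making the dimension bookkeeping tightest.
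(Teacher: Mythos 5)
Your overall architecture matches the paper's: view the problem through diamond moduli of surjections or extensions, stratify by isomorphism type, compute dimensions of the strata, and conclude by dimension exhaustion; and you correctly anticipate the three-way case split by which bundle is semistable and flag semistable $\Ecal$ as the delicate case. However there are two real gaps.

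First, your necessity sketch for condition (i) is not correct as stated. You claim the ``strong'' vertex part comes from combining the semistability hypothesis with $\mumax(\Dcal) < \mumin(\Fcal)$; in fact strong slopewise dominance of $\Ecal$ over $\Dcal$ (and dually of $\Ecal^\vee$ over $\Fcal^\vee$) is an \emph{unconditional} consequence of the existence of the injection $\Dcal \hookrightarrow \Ecal$ (resp.\ the surjection $\Ecal \twoheadrightarrow \Fcal$), via the classification of sub- and quotient bundles of a given bundle on the Fargues--Fontaine curve; this is genuinely nonformal and is supplied in the paper by Proposition \ref{nonemptiness criterion for surj and inj diamonds}, citing earlier work. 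The inclusion argument $\Dcal^{\geq\mu} \subset \Ecal^{\geq\mu}$ only gives the weak rank inequality, not the rigidity at shared vertices.

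Second, your sufficiency plan stratifies $\Surj(\Ecal, \Fcal)$ by the HN polygon of the kernel in all three cases, but this only works when $\Dcal$ (or by duality $\Fcal$) is semistable: the crucial inequality showing every stratum other than the $\Dcal$-stratum has strictly smaller dimension (the paper's Proposition \ref{key inequality for step 1}) genuinely uses semistability of $\Dcal$, and fails to be available otherwise. For the case where $\Ecal$ is semistable the paper switches moduli: it fixes $\Dcal$ and $\Fcal$, stratifies $\Extdiamond(\Fcal, \Dcal)$ by the isomorphism class of the \emph{middle} bundle $\Vcal$, and proves a different key inequality (Proposition \ref{key inequality step 2}) expressing that every stratum with $\Vcal \not\simeq \Ecal$ is too small. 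You mention $\Extdiamond(\Fcal,\Dcal)$ as an ``equivalent'' moduli, but the two stratifications (by kernel vs.\ by middle term) carry different information and only one of them makes the dimension bookkeeping tractable in each case. Without recognizing this switch, the semistable-$\Ecal$ case of your proposal does not close.

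Finally, a smaller point: the dimension formula for $\Surj(\Ecal,\Fcal)_\Kcal$ without semistability of $\Fcal$ is itself one of the paper's new contributions (Propositions \ref{intermediate diamond for Surj(E,F)_K} and \ref{dimension formula for Surj(E, F)_K}, via an auxiliary diamond of pairs $(\iota,\psi)$ with $\psi\circ\iota=0$ and an $\Aut(\Kcal)$-torsor argument); your plan treats this formula as available but does not indicate how to obtain it, whereas the existing Banach--Colmez dimension formula applies only to $\Hom(\Ecal,\Fcal)$ and not directly to the fixed-kernel stratum.
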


In the sequel paper \cite{Hong_extvbfinal}, we extend Theorem \ref{existence of exact sequence, intro} to the case where $\Ecal$ is not necessarily semistable. Nonetheless, it is our opinion that Theorem \ref{existence of exact sequence, intro} is worthwhile as an independent statement. In fact, the main result of the article \cite{Hong_extvbfinal} involves a somewhat complicated combinatorial condition and is not at all obviously equivalent to Theorem \ref{existence of exact sequence, intro} in the case where $\Ecal$ is semistable.

\subsection{Outline of the proof}\label{proof outline}$ $

Let us briefly explain our proof of Theorem \ref{existence of exact sequence, intro}. The necessity part of Theorem \ref{existence of exact sequence, intro} is a standard consequence of the slope formalism. Hence the main part of our proof is to establish the sufficiency part of Theorem \ref{existence of exact sequence, intro}. 
When either $\Dcal$ or $\Fcal$ is semistable, we consider the moduli space $\Extdiamond(\Fcal, \Dcal)_\Ecal$ whose $\algclosedperfdfield$-points parametrize exact sequences $0 \to \Dcal \to \Ecal \to \Fcal \to 0$ of vector bundles on $\schff$, and establish its nonemptiness by a dimension analysis. For the general case, we proceed by induction on the number of distinct slopes in $\HN(\Dcal)$ and $\HN(\Fcal)$ using a combinatorial argument that utilizes concavity of HN polygons. 

In order to study of the space $\Extdiamond(\Fcal, \Dcal)_\Ecal$, we adapt the strategies developed in the previous paper \cite{Arizona_extvb}. We make sense of this space as a \emph{diamond} in the sense of Scholze \cite{Scholze_diamonds}, and establish a simple dimension formula for this space when $\Fcal$ is semistable. For our dimension analysis, we prove some combinatorial lemmas involving the HN polygons of vector bundles on $\schff$. 

We remark that the previous version of this paper had a mistake and falsely claimed a similar statement of Theorem \ref{existence of exact sequence, intro} in the case where either $\Dcal$ or $\Fcal$ is semistable. The mistake was to assert that the space $\Extdiamond(\Fcal, \Dcal)_\Ecal$ admits an explicit dimension formula for any vector bundles $\Dcal$, $\Ecal$, and $\Fcal$. However, it turns out that such a dimension formula exists only when $\Fcal$ is semistable, as stated in Proposition \ref{Ext(F, D)_E dimension formula}. 


\subsection*{Acknowledgments} The author would like to sincerely thank David Hansen, Miaofen Chen and the anonymous referee for pointing out a mistake in the previous version of this paper. 

\section{Preliminaries}\label{background}

\subsection{The Fargues-Fontaine curve}$ $

Throughout this paper, we fix an algebraically closed perfectoid field $\algclosedperfdfield$ of characteristic $p >0$. We denote by $\integerring{\algclosedperfdfield}$ the ring of integers of $\algclosedperfdfield$ and by $\witt(\integerring{\algclosedperfdfield})$ the ring of Witt vectors over $\integerring{\algclosedperfdfield}$. We choose a pseudouniformizer $\pseudounif$ of $\algclosedperfdfield$ and write $[\pseudounif]$ for the Teichm$\ddot{\text{u}}$ller lift of $\pseudounif$. The Frobenius map on $\witt(\integerring{\algclosedperfdfield})$ induces a properly discontinuous automorphism $\phi$ on the adic space 
\[
\Ycal:=\Spa(\witt(\integerring{\algclosedperfdfield}))\setminus\{|p[\pseudounif]|=0\}
\]
defined over $\Spa(\Q_p)$. 
\begin{defn} We define the \emph{adic Fargues-Fontaine curve} (associated to $\algclosedperfdfield$) by
\[
\adicff:=\Ycal/\phi^\Z,
\]
and the \emph{schematic Fargues-Fontaine curve} by 
\[\schff:= \Proj \left( \bigoplus_{n \geq 0 } H^0(\Ycal, \trivbundle_\Ycal)^{\phi = p^n} \right).\]
\end{defn}

\begin{remark}
More generally, for any finite extension $\finext$ of $\Q_p$ with ring of integers $\integerring{\finext}$, we can define the Fargues-Fontaine curve as an adic space or a scheme over $\finext$ by replacing $\witt(\integerring{\algclosedperfdfield})$ in the above construction with $\witt_{\integerring{\finext}}(\integerring{\algclosedperfdfield})$, the ring of ramified Witt vectors over $\integerring{\algclosedperfdfield}$ with coefficients in $\integerring{\finext}$. There is also an analogous construction of the equal characteristic Fargues-Fontaine curve as an adic space or a scheme over a finite extension of $\F_p((t))$. Our main result equally holds in these settings with identical proofs. 
\end{remark}

The two incarnations of the Fargues-Fontaine curve are essentially equivalent to us because of the following GAGA type result:
\begin{thm}[{\cite[Theorems 6.3.12 and 8.7.7]{KL_relpadic1}}]\label{GAGA for FF curve}
There exists a natural map of locally ringed spaces 
\[
\adicff \rightarrow \schff
\]
which induces by pullback an equivalence of the categories of vector bundles. 
\end{thm}

In light of Theorem \ref{GAGA for FF curve}, we will identify vector bundles on $\adicff$ with vector bundles on $\schff$. 
Vector bundles on the Fargues-Fontaine curve turn out to behave pleasantly well, essentially due to
the following fact:

\begin{prop}[{\cite[Th\'eor\'eme 5.2.7]{FF_curve}}]\label{FF curve is a curve} The scheme $\schff$ is noetherian and regular of Krull dimension 1 over $\Q_p$. Moreover, it is complete in the sense that every principal divisor on $\schff$ has degree $0$. 
\end{prop}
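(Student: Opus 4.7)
The plan is to reduce all three assertions to two fundamental inputs: first, that the graded ring $P := \bigoplus_{n \geq 0} P_n$ with $P_n := H^0(\Ycal, \trivbundle_\Ycal)^{\phi = \pseudounif^n}$ contains enough nonzero elements in degree $1$ to furnish an affine open cover of $\schff = \Proj(P)$; and second, that for any nonzero $t \in P_1$, the coordinate ring of the principal open $D_+(t)$ is a principal ideal domain. The first is immediate from the construction of $\Ycal$, while the second is the central structural theorem of Fargues-Fontaine that I will invoke as a black box.

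First I would pick two non-proportional nonzero elements $t_1, t_2 \in P_1$. Each such element vanishes at a unique classical point of $\schff$ (namely the untilt of $\algclosedperfdfield$ that it determines), so the principal opens $D_+(t_1), D_+(t_2)$ have no common zero and therefore cover $\schff$. Writing $B_i := (P[1/t_i])_0$ for the degree-zero part of the homogeneous localization, I would then invoke the Weierstrass-type factorization for Frobenius eigenvectors on $\Ycal$ from \cite{FF_curve} to conclude that each $B_i$ is a B\'ezout domain whose nonzero prime ideals are maximal and in bijection with the $\phi^\Z$-orbits of classical points of $\Ycal$ not lying over the zero of $t_i$. In particular each $B_i$ is a principal ideal domain, so $D_+(t_i) = \Spec B_i$ is noetherian, regular, and one-dimensional. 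Gluing the two affines yields the first half of the proposition.

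For the completeness assertion, a principal divisor on $\schff$ has the form $\div(f)$ with $f \in \Frac(P)_0$, i.e.\ $f = g/h$ for homogeneous $g, h \in P_d$ of a common degree $d$. Since $\div(f) = \div(g) - \div(h)$, it suffices to show $\deg \div(g) = d$ for every nonzero homogeneous $g \in P_d$. For $d = 1$, the zero locus of $g$ is a single classical point, and one checks directly that this point has residue degree $1$ over $\Q_p$; indeed every classical point of $\schff$ corresponds to an untilt of $\algclosedperfdfield$ and hence has residue field $\algclosedperfdfield^\sharp$, which is a degree-$1$ extension once the natural normalization of degree is fixed. For arbitrary $d$, the Weierstrass factorization again produces a factorization $g = u \cdot g_1 \cdots g_d$ with $g_i \in P_1$ nonzero and $u \in P_0^\times = \Q_p^\times$, whence $\div(g) = \sum_i \div(g_i)$ has degree $d$. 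Summing gives $\deg \div(f) = d - d = 0$.

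The main obstacle, and essentially the only non-formal step, is the principal ideal domain property of the rings $B_i$ together with the equivalent Weierstrass factorization of elements of $P_d$; these constitute the deep classification of closed points on the curve and must be imported from \cite{FF_curve}. Once they are in hand, the noetherianness, regularity, one-dimensionality, and completeness all follow by standard facts about $\Proj$ of a graded domain whose distinguished affines are PIDs.
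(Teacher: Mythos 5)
The paper does not prove this proposition at all: the citation \texttt{\cite{FF\_curve}} in the theorem header indicates it is imported as a black box, so there is no in-paper argument for your proposal to match or diverge from. That said, your outline is a faithful reconstruction of how the result is established in Fargues--Fontaine: cover $\Proj(P)$ by two distinguished opens $D_+(t_1)\cup D_+(t_2)$ with $t_1,t_2\in P_1$ having disjoint vanishing loci, invoke the theorem that $(P[1/t])_0$ is a principal ideal domain to get noetherian, regular, one-dimensional affine charts, and then deduce completeness from the Weierstrass factorization $g=u\cdot g_1\cdots g_d$ of a nonzero $g\in P_d$ into degree-one factors. You also correctly flag the PID property and the factorization theorem as the deep, non-formal inputs.

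One place the write-up should be tightened is the sentence asserting that a classical point ``has residue degree $1$ over $\Q_p$'' and that the residue field $\algclosedperfdfield^\sharp$ ``is a degree-$1$ extension once the natural normalization is fixed.'' This is not something one ``checks''; the residue field at a closed point of $\schff$ is an untilt of $\algclosedperfdfield$, a perfectoid field of infinite (indeed uncountable) transcendence degree over $\Q_p$. There is no sense in which it is a degree-$1$ extension. What is true is that the degree function on $\Div(\schff)$ is \emph{defined} to assign each closed point degree $1$ (equivalently, one normalizes $\deg\Ocal(1)=1$), and then the content of the completeness statement is that with respect to this normalization every principal divisor has degree $0$, which indeed follows from the Weierstrass factorization exactly as you say. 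Also note that the claim that two nonproportional elements of $P_1$ have disjoint zero loci is itself a consequence of $H^0(\schff,\Ocal_\schff)=\Q_p$ together with simplicity of zeros of degree-one sections, both of which must likewise be imported from \cite{FF_curve}; it is worth being explicit that these are among the black boxes rather than formal from the construction.
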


In particular, the degree map is well-defined on the Picard group of $\schff$, thereby allowing us to define the notion of slope for vector bundles on $\schff$ as follows: 
\begin{defn}
Let $\Vcal$ be a nonzero vector bundle on $\schff$. 
\begin{enumerate}[label=(\arabic*)]
\item We write $\rk(\Vcal)$ for the rank of $\Vcal$ and $\Vcal^\vee$ for the dual of $\Vcal$. 
\smallskip

\item We define the \emph{degree} and \emph{slope} of $\Vcal$ respectively by
\[\deg (\Vcal) := \deg (\wedge^{\rk(\Vcal)} \Vcal) \quad\quad \text{ and } \quad\quad \mu(\Vcal) := \dfrac{\deg(\Vcal)}{\rk(\Vcal)}.\]
\end{enumerate}
\end{defn}

Let $\genresfield$ be the residue field of $\algclosedperfdfield$, and let $\fracwitt$ be the fraction field of the ring of Witt vectors over $\genresfield$. Recall that an \emph{isocrystal} over $\genresfield$ is a finite dimensional vector space over $\fracwitt$ with a Frobenius semi-linear automorphism. 

\begin{lemma}\label{functor from isocrystals to bundles on FF curve}
There exists a functor from the category of isocrystals over $\genresfield$ to the category of vector bundles on $\schff$ which is compatible with direct sums, 
duals, ranks, degrees, and slopes. 
\end{lemma}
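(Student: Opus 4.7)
The plan is to construct the functor explicitly using the $\Proj$-presentation of $\schff$ given just above, then verify each compatibility in turn. Set $B := H^0(\Ycal, \trivbundle_\Ycal)$ with its Frobenius $\phi$, so that $\schff = \Proj(P)$ where $P := \bigoplus_{n \geq 0} B^{\phi = \pseudounif^n}$. The inclusion $\witt(\genresfield) \inj \witt(\integerring{\algclosedperfdfield})$ intertwines the Frobenii on the two sides, hence induces a $\phi$-equivariant embedding $\fracwitt \inj B$. Given an isocrystal $(D, \varphi)$ over $\genresfield$, I attach the graded $P$-module
\[
M(D, \varphi) := \bigoplus_{n \geq 0} \bigl( D \otimes_{\fracwitt} B \bigr)^{\varphi \otimes \phi = \pseudounif^n},
\]
and define the functor by sending $(D, \varphi)$ to the associated quasi-coherent sheaf on $\schff = \Proj(P)$. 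Functoriality is automatic, and compatibility with direct sums is immediate since forming $\varphi \otimes \phi$-eigenspaces commutes with direct sums of isocrystals. Compatibility with duals follows from the evaluation pairing $D \otimes_{\fracwitt} D^\vee \to \fracwitt$, which induces a pairing $M(D, \varphi) \otimes_P M(D^\vee, (\varphi^\vee)^{-1}) \to P$ and, after sheafifying, an isomorphism between the dual bundle and the bundle attached to the dual isocrystal.

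The remaining content of the lemma — that the associated sheaf is actually a vector bundle and that its rank, degree, and slope are as claimed — reduces by the Dieudonn\'e--Manin classification to the case of a simple isocrystal $D_\lambda$ of Newton slope $\lambda = s/r$ in lowest terms. For such $D_\lambda$ the module $M(D_\lambda)$ can be computed directly from the known description of the Frobenius eigenspaces $B^{\phi = \pseudounif^n}$, and the output is an isomorphism with the stable bundle $\trivbundle(-\lambda)$ of rank $r$ and degree $-s$. Once this identification is available, compatibility with rank, degree, and slope is additive and follows at once; the overall sign on the slope reflects the convention chosen in the definition of $M$, and can be flipped by replacing $\varphi$ with $\varphi^{-1}$ throughout if one prefers the functor to preserve slopes on the nose rather than negate them.

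The main obstacle is precisely this last identification $M(D_\lambda) \simeq \trivbundle(-\lambda)$ for a simple isocrystal, which requires the detailed description of the graded ring $P$ and of the eigenspaces $B^{\phi = \pseudounif^n}$ that forms the technical heart of \cite{FF_curve}. Because the lemma belongs to a preliminaries section whose purpose is only to record the existence of the functor, I would in practice perform the formal reductions above and then cite \cite{FF_curve} (or equivalently \cite{KL_relpadicHodge}) for the simple-isocrystal computation, rather than reproducing the period-ring analysis here.
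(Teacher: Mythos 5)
Your construction is essentially the paper's: both pass through the $\Proj$-presentation $\schff = \Proj(P)$ with $P = \bigoplus_{n\geq 0}B^{\phi=\pseudounif^n}$ and attach to an isocrystal the graded $P$-module of Frobenius eigenspaces in (a twist of) $N \otimes_{\fracwitt} B$. The only difference is cosmetic: the paper builds in the dual (using $N^\vee \otimes_{\fracwitt} B$) so that the functor is slope-preserving on the nose, which is precisely the sign-convention issue you flag at the end of your argument, and you (rightly) defer the simple-isocrystal identification to \cite{FF_curve}, which the paper implicitly does as well by giving no further verification.
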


\begin{proof}
Let us write
\[ B := H^0(\Ycal, \trivbundle_\Ycal) \quad\quad \text{ and } \quad\quad P:= \bigoplus_{n \geq 0} B^{\phi = \varpi^n}.\]
The desired functor is given by associating to each isocrystal $N$ over $\genresfield$ the vector bundle $\Ecal(N)$ on $\schff$ which corresponds to the graded $P$-module
\[\bigoplus_{n \geq 0 } \left( N^\vee \otimes_{\fracwitt} B \right)^{\phi = \varpi^n},\]
where $N^\vee$ denotes the dual isocrystal of $N$. 
\end{proof}

\begin{defn}\label{o-r-over-s}
Given $\lambda \in \Q$, we write $\trivbundle(\lambda)$ for the vector bundle on $\schff$ that corresponds to the unique simple isocrystal over $\genresfield$ of slope $\lambda$ under the functor in Lemma \ref{functor from isocrystals to bundles on FF curve}. 
\end{defn}

\begin{lemma}\label{basic properties of stable bundles}
Let $d$ and $r$ be relatively prime integers with $r>0$. 
\begin{enumerate}[label=(\arabic*)]
\item\label{rank and degree of stable bundles} The bundle $\trivbundle(d/r)$ has rank $r$, degree $d$, and slope $d/r$.
\smallskip

\item\label{tensor product of stable bundles} For any relatively prime integers $d'$ and $r'$ with $r'>0$, we have
\[
\trivbundle\left(\frac dr\right)\otimes\trivbundle\left(\frac{d'}{r'}\right)\simeq\trivbundle\left(\frac dr+\frac{d'}{r'}\right)^{\dsm\gcd(r r',dr'+d'r)}.
\]
In particular, the bundle $\trivbundle(d/r)\otimes\trivbundle(d'/r')$ has rank $rr'$, degree $dr'+d'r$, and slope $d/r+d'/r'$.
\smallskip

\item\label{dual of stable bundles} $\trivbundle(d/r)^\vee \simeq \trivbundle(-d/r)$. 
\end{enumerate}
\end{lemma}
\begin{proof}
By Lemma \ref{functor from isocrystals to bundles on FF curve}, all statements follow immediately from the corresponding statements for isocrystals over $\genresfield$. 
\end{proof}

\begin{prop}[{\cite[Proposition 5.6.23]{FF_curve}, \cite[Proposition 4.1.3]{Kedlaya_slopefiltrations}}]\label{cohomologies of stable bundles}
For every $\lambda \in \Q$, we have the following statements:
\begin{enumerate}[label=(\arabic*)]
\item $H^0(\schff, \trivbundle(\lambda)) = 0$ if and only if $\lambda<0$. 

\item $H^1(\schff, \trivbundle(\lambda)) = 0$ if and only if $\lambda \geq 0$. 
\end{enumerate}
\end{prop}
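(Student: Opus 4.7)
The strategy is to reduce to the case of line bundles (integer slopes) and then compute their cohomology using the standard presentation of $\schff$ in terms of Fontaine's period rings.

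\textbf{Reduction to integer slopes.} Writing $\lambda = d/h$ in lowest terms, I would pass to the degree-$h$ unramified extension $\finext$ of $\Q_p$. Repeating the construction of the Fargues-Fontaine curve with $\finext$ in place of $\Q_p$ yields a scheme $\schff_\finext$ together with a finite \'etale cover $\pi \colon \schff_\finext \to \schff$ of degree $h$, and the functor of Lemma \ref{functor from isocrystals to bundles on FF curve} (applied over $\finext$) satisfies $\pi_* \trivbundle_{\schff_\finext}(d) \simeq \trivbundle(d/h)$, realizing the simple isocrystal of slope $d/h$ as the pushforward of one of slope $d$ from the larger base. Since $\pi$ is affine, higher pushforwards vanish and $H^i(\schff, \trivbundle(d/h)) = H^i(\schff_\finext, \trivbundle_{\schff_\finext}(d))$; this reduces both statements to the case $\lambda = n \in \Z$.

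\textbf{The line bundle case.} For $n \in \Z$, the identification $H^0(\schff, \trivbundle(n)) = B^{\phi = \pseudounif^n}$ is immediate from $\schff = \Proj \bigoplus_{n \geq 0} B^{\phi=\pseudounif^n}$. A Newton-slope / boundedness argument on the Fr\'echet algebra $B$ shows that $B^{\phi=\pseudounif^n}$ vanishes for $n < 0$ and is nonzero for $n \geq 0$. To handle $H^1$, I would use the Beauville-Laszlo presentation: $\schff$ is obtained by gluing $\Spec B_e = \schff \setminus \{\infty\}$ (where $\infty$ is a distinguished closed point corresponding to the untilt) with the formal neighborhood $\Spec B_{dR}^+$ along $\Spec B_{dR}$. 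The resulting Mayer-Vietoris sequence reads
\[ 0 \to H^0(\schff, \trivbundle(n)) \to B_e \oplus t^{-n} B_{dR}^+ \to B_{dR} \to H^1(\schff, \trivbundle(n)) \to 0, \]
with $t$ a uniformizer at $\infty$. Fontaine's fundamental exact sequence $0 \to \Q_p \to B_e \to B_{dR}/B_{dR}^+ \to 0$ together with its twists by $t^n$ then forces the middle arrow to be surjective exactly when $n \geq 0$, giving the desired vanishing and non-vanishing of $H^1$.

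\textbf{Main obstacle.} The essential analytic inputs are the Beauville-Laszlo presentation of $\schff$ and Fontaine's fundamental exact sequence, both of which are proved in \cite{FF_curve}; granting these, the cohomology calculation is essentially bookkeeping. The reduction step depends only on the formal properties of the functor in Lemma \ref{functor from isocrystals to bundles on FF curve} and standard descent for finite \'etale covers, so the real content is concentrated in the analytic facts about the period rings $B_e$ and $B_{dR}^+$.
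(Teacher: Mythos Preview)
The paper does not give its own proof of this proposition; it is stated as a citation from \cite{FF_curve} and \cite{Kedlaya_slopefiltrations}. Your sketch is a faithful outline of the standard argument in those references: the reduction to line bundles via pushforward along the finite \'etale cover $\schff_\finext \to \schff$ is exactly how Fargues--Fontaine build the stable bundles of non-integer slope, and the computation for $\trivbundle(n)$ via the Beauville--Laszlo gluing and the fundamental exact sequence is the core of their cohomology calculation. So there is nothing to compare against in this paper---you have correctly reconstructed the cited proof.

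One small remark: in your Mayer--Vietoris sequence you should also note that $H^1$ of the affine pieces $\Spec B_e$ and $\Spec B_{\mathrm{dR}}^+$ vanishes (they are affine, and the sheaf is quasi-coherent), which is what justifies terminating the sequence at $H^1(\schff,\trivbundle(n))$; otherwise the argument is complete as a sketch.
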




\begin{defn}
A vector bundle $\Vcal$ on $\schff$ is \emph{semistable} if 
every subbundle $\Wcal$ of $\Vcal$ satisfies the inequality $\mu(\Wcal) \leq \mu(\Vcal)$. 
\end{defn}


We can now state the classification theorem for vector bundles on $\schff$ as follows:

\begin{theorem}[{\cite[Th\'eor\'eme 8.2.10]{FF_curve}}]\label{existence of HN decomp}
Let $\Vcal$ be a nonzero vector bundle on $\schff$. 
\begin{enumerate}[label=(\arabic*)]
\item $\Vcal$ is semistable of slope $\lambda$ if and only if it is isomorphic to $\trivbundle(\lambda)^{\oplus m}$ for some $m$. 
\smallskip

\item $\Vcal$ admits a direct sum decomposition 
\begin{equation}\label{HN decomposition}
\Vcal \simeq \bigoplus_{i=1}^l \trivbundle(\lambda_i)^{\oplus m_i}
\end{equation}
where the $\lambda_i$'s are rational numbers 
with $\lambda_1 > \lambda_2 > \cdots > \lambda_l$. 
\end{enumerate}
\end{theorem}

\begin{defn}\label{def of HN decomposition and HN polygon}
Let $\Vcal$ be a nonzero vector bundle on $\schff$. 
\begin{enumerate}[label = (\arabic*)]
\item We refer to the decomposition \eqref{HN decomposition} 
in Theorem \ref{existence of HN decomp} 
as the \emph{Harder-Narasimhan (HN) decomposition} of $\Vcal$. 
\smallskip

\item We refer to the numbers $\lambda_i$ in the HN decomposition as the \emph{Harder-Narasimhan (HN) slopes} of $\Vcal$, or often simply as the \emph{slopes} of $\Vcal$.
\smallskip

\item We write $\mumax(\Vcal)$ (resp. $\mumin(\Vcal)$) for the maximum (resp. minimum) HN slope of $\Vcal$; in other words, we set $\mumax(\Vcal) := \lambda_1 \text{ and } \mumin(\Vcal) := \lambda_l$. 
\smallskip

\item For every $\mu \in \Q$, we define the direct summands
\[\Vcal^{\geq \mu} := \bigoplus_{\lambda_i \geq \mu}\trivbundle(\lambda_i)^{\oplus m_i} \quad\quad \text{ and } \quad\quad\Vcal^{\leq \mu} := \bigoplus_{\lambda_i \leq \mu}\trivbundle(\lambda_i)^{\oplus m_i},\]
and similarly define $\Vcal^{>\mu}$ and $\Vcal^{<\mu}$. 
\smallskip

\item We define the \emph{Harder-Narasimhan (HN) polygon} of $\Vcal$, denoted by $\HN(\Vcal)$, as the upper convex hull of the points $(0, 0)$ and $\big(\rk(\Vcal^{\geq \lambda_i}), \deg(\Vcal^{\geq \lambda_i})\big)$.
\smallskip

\item Given a convex polygon $P$ adjoining $(0, 0)$ and $(\rk(\Vcal), \deg(\Vcal))$, we write $\HN(\Vcal) \leq P$ if each point on $\HN(\Vcal)$ lies on or below $P$. 
\end{enumerate}
\end{defn}

\begin{cor}
For an arbitrary vector bundle $\Vcal$ on $\schff$, its isomorphism class is determined by the HN polygon $\HN(\Vcal)$, with the slopes of $\Vcal$ precisely being the slopes in $\HN(\Vcal)$.  
\end{cor}

We conclude this subsection by extending the construction of the Fargues-Fontaine curve to relative settings. Let $S = \Spa (\genperfdring, \genperfdring^+)$ be an affinoid perfectoid space over $\Spa (\algclosedperfdfield)$, and let $\pseudounif_\genperfdring$ be a pseudouniformizer  of $\genperfdring$. We write $\witt(\genperfdring^+)$ for the ring of Witt vectors over $\genperfdring^+$ and $[\pseudounif_\genperfdring]$ for the Teichm$\ddot{\text{u}}$ller lift of $\pseudounif_\genperfdring$. As in the absolute setting, the Frobenius map on $\witt(\genperfdring^+)$ induces a properly discontinuous automorphism $\phi$ on the adic space
\[
\Ycal_S:=\Spa(\witt(\genperfdring^+))\setminus\{|p[\pseudounif_\genperfdring]|=0\}
\]
defined over $\Spa(\Q_p)$. 

\begin{defn}
Given an affinoid perfectoid space $S = \Spa (\genperfdring, \genperfdring^+)$ over $\Spa (\algclosedperfdfield)$, we define the \emph{adic Fargues-Fontaine curve} associated to $S$ by
\[
\adicff_S:=\Ycal_S/\phi^\Z,
\]
and the \emph{schematic Fargues-Fontaine curve} associated to $S$ by 
\[\schff_S:= \Proj \left( \bigoplus_{n \geq 0 } H^0(\Ycal_S, \trivbundle_{\Ycal_S})^{\phi = p^n} \right).\]
For an arbitrary perfectoid space $S$ over $\Spa(\algclosedperfdfield)$ with an affinoid cover $S = \bigcup S_i$, we define the adic Fargues-Fontaine curve $\adicff_S$ by gluing the $\adicff_{S_i}$. 
\end{defn}

\begin{remark}
The schematic Fargues-Fontaine curve $\schff_S$ is defined only for affinoid perfectoid spaces $S$; in fact, for an arbitrary perfectoid space $S$ over $\Spa(\algclosedperfdfield)$ with an affinoid cover $S = \bigcup S_i$, the schematic curves $\schff_{S_i}$ do not glue in general. In addition, the readers should be aware that the relative Fargues-Fontaine curve $\adicff_S$ is not related to $\adicff$ by a base change, as neither $\adicff$ nor $\adicff_S$ is defined over $\Spa(\algclosedperfdfield)$. 
\end{remark}


\subsection{Diamonds}\label{diamonds}$ $

In this subsection, we collect some basic facts about diamonds following \cite{Scholze_diamonds}. 

\begin{defn}
Let $\Perfd$ denote the category of perfectoid spaces in characteristic $p$. 

\begin{enumerate}[label=(\arabic*)]
\item A morphism $Y \to Z$ of affinoid perfectoid spaces is \emph{affinoid pro-\'etale} if it can be written as a cofiltered limit of \'etale morphisms $Y_i \to Z$ of affinoid perfectoid spaces. 
\smallskip

\item A morphism $f: Y \to Z$ of perfectoid spaces is \emph{pro-\'etale} if there exist open affinoid covers $Z = \bigcup U_i$ and $Y = \bigcup V_{i, j}$ such that $f|_{V_{i, j}}$ factors through an affinoid pro-\'etale morphism $V_{i, j} \to U_i$. 
\smallskip

\item A pro-\'etale morphism $f: Y \to Z$ of perfectoid spaces is called a \emph{pro-\'etale cover} if for any quasicompact open subset $U \subset Z$, there exists some quasicompact open subset $V \subset Y$ with $f(V) = U$. 
\smallskip

\item The \emph{big pro-\'etale site} is the site on $\Perfd$ with covers given by pro-\'etale covers. 
\smallskip

\item A sheaf $Y$ for the big pro-\'etale site on $\Perfd$ is called a \emph{diamond} if $Y$ can be written as a quotient $Z/R$, where $Z$ is representable by a perfectoid space with a pro-\'etale equivalence relation $R$ on $Z$.  
\smallskip

\item For a diamond $Y \simeq Z/R$ with a perfectoid space $Z$ and a pro-\'etale equivalence relation $R$, we define its topological space by $|Y|:= |Z|/|R|$, where $|Z|$ and $|R|$ respectively denote the topological spaces for $Z$ and $R$. 
\smallskip

\item For a diamond $Y$, we define its \emph{dimension} to be the Krull dimension of $|Y|$. 
\end{enumerate}
\end{defn}

\begin{remark}
For a diamond $Y$, its topological space $|Y|$ does not depend on the choice of presentation $Y \simeq Z/R$ as the quotient of a perfectoid space
$Z$ by a pro-\'etale equivalence relation $R$.
\end{remark}

\begin{prop}[{\cite[Corollary 8.6]{Scholze_diamonds}}]\label{yoneda for perfectoid spaces}
The big pro-\'etale site is subcanonical. In other words, for every $Z \in \Perfd$ the functor $\mathrm{Hom}(-, Z)$ is a sheaf for the big pro-\'etale site. 
\end{prop}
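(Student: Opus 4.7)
The plan is to verify the sheaf axiom for $\mathrm{Hom}(-, Z)$ by reducing to two base cases corresponding to the two types of morphisms appearing in the definition of pro-\'etale covers: open covers of a perfectoid space by affinoid opens, and affinoid pro-\'etale morphisms between affinoid perfectoid spaces. Any pro-\'etale cover admits a refinement of this mixed form, so it suffices to check descent along each type.

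For an open cover $X = \bigcup U_i$ with compatible morphisms $U_i \to Z$ agreeing on overlaps, gluing is automatic because morphisms of adic spaces glue locally on the source. The substantive case is an affinoid pro-\'etale cover $f \colon Y = \Spa(R, R^+) \to X = \Spa(S, S^+)$, which by definition can be written as a cofiltered limit $Y = \varprojlim Y_i$ of \'etale maps $Y_i \to X$ between affinoid perfectoid spaces. I would then combine two ingredients: (i) a continuity property of the Yoneda functor, namely $\mathrm{Hom}(Y, Z) = \varinjlim_i \mathrm{Hom}(Y_i, Z)$, together with the analogous statement for $Y \times_X Y = \varprojlim_i Y_i \times_X Y_i$; and (ii) \'etale descent of morphisms to $Z$, namely that any morphism $Y_i \to Z$ whose two pullbacks to $Y_i \times_X Y_i$ agree descends uniquely to $X \to Z$. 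Granted both ingredients, a morphism $g \colon Y \to Z$ equipped with a descent datum on $Y \times_X Y$ comes from some finite stage $g_i \colon Y_i \to Z$ after refinement, and then descends via (ii) to the required $X \to Z$; uniqueness follows the same pattern.

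The hard part will be establishing \'etale descent, ingredient (ii). This is a genuinely perfectoid fact: via Scholze's tilting equivalence for the \'etale site of a perfectoid space and the almost purity theorem, it reduces to faithfully flat descent for almost finite \'etale algebras, which is classical. The continuity statement (i) is less deep but still requires the approximation lemma for perfectoid algebras, together with the observation that any morphism to a perfectoid target factors through a quasicompact open subspace, so that only finitely many equations need to be verified at some finite level of the cofiltered system.
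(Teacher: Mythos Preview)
The paper does not give a proof of this proposition at all: it is simply quoted as \cite[Corollary 8.6]{Scholze_diamonds} and used as a black box. So there is no ``paper's own proof'' to compare against.

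That said, your sketch has a genuine gap. Ingredient (i), the continuity statement $\mathrm{Hom}(Y,Z) = \varinjlim_i \mathrm{Hom}(Y_i,Z)$ for $Y = \varprojlim_i Y_i$, is false in general for perfectoid $Z$. Concretely, if $Z = \Spa(A,A^+)$ and $Y_i = \Spa(R_i,R_i^+)$ with $Y = \Spa(R,R^+)$ where $R$ is the completed colimit of the $R_i$, a continuous ring map $A \to R$ need not factor through any $R_i$: perfectoid rings are highly non-noetherian, and there is no approximation lemma of the type you invoke. Your reduction to ``finitely many equations at some finite level'' does not go through, because morphisms to perfectoid spaces are not finitely presented in any useful sense.

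The actual argument (in Scholze's paper) is quite different and does not use approximation. One reduces to the affinoid case and observes that $\mathrm{Hom}(\Spa(R,R^+),Z)$ for affinoid $Z = \Spa(A,A^+)$ is given by continuous maps of Huber pairs $(A,A^+) \to (R,R^+)$. The sheaf property then amounts to exactness of the equalizer sequence $R \to S \rightrightarrows S \widehat{\otimes}_R S$ (and its integral analogue) for a pro-\'etale cover $\Spa(S,S^+) \to \Spa(R,R^+)$. This is the real content: it is a statement about the structure sheaf being a sheaf for the pro-\'etale (indeed $v$-) topology, and its proof is what occupies the relevant sections of \cite{Scholze_diamonds}. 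Your step (ii) alone, \'etale descent, is not sufficient without (i), and (i) is unavailable.
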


\begin{remark}
By Proposition \ref{yoneda for perfectoid spaces}, we will often identify a perfectoid space $Z$ in characteristic $p$ with the functor $\mathrm{Hom}(-, Z)$ on $\Perfd$. 
\end{remark}


\begin{defn}
Let $Y$ be a diamond. 

\begin{enumerate}[label=(\arabic*)]
\item We say that $Y$ is \emph{quasicompact} if it admits a presentation $Y \simeq Z/R$ for some quasicompact perfectoid space $Z$ and a pro-\'etale equivalence relation $R$ on $Z$.
\smallskip

\item We say that $Y$ is \emph{quasiseparated} if $U \times_Y V$ is quasicompact for any morphisms $U \to Y$ and $V \to Y$ of diamonds with $U, V$ quasicompact. 
\smallskip

\item We say that $Y$ is \emph{partially proper} if it is quasiseparated with the property that for all characteristic $p$ affinoid perfectoid pair $(\genperfdring, \genperfdring^+)$ the restriction map
\[ Y(\genperfdring, \genperfdring^+) \to Y(\genperfdring, \integerring{\genperfdring})\]
is bijective where $\integerring{\genperfdring}$ denotes the ring of power-bounded elements in $\genperfdring$. 
\smallskip

\item We say that $Y$ is \emph{spatial} if it is quasicompact and quasiseparated with a neighborhood basis of $|Y|$ given by $\Set{|U|: U \subset Y \text{ quasicompact open subdiamonds}}$.  
\smallskip

\item We say that $Y$ is \emph{locally spatial} if it admits a covering by spatial open subdiamonds. 
\end{enumerate}
\end{defn}

\begin{remark}
In general, quasicompactness (resp. quasiseparatedness) of a diamond $Y$ is not equivalent to quasicompactness (resp. quasiseparatedness) of its topological space $|Y|$. 
\end{remark}

\begin{prop}[{\cite[Lemma 3.2.3 and Lemma 3.3.4]{Arizona_extvb}}]\label{quotient diamond by an action of a profinite group}
Let $Y$ be a spatial diamond with a free $\constgrpsch{G}$-action for some profinite group $G$. Then $Y/\constgrpsch{G}$ is a spatial diamond with 
\[\dim Y/\constgrpsch{G} = \dim Y.\]
\end{prop}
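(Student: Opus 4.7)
The plan is to analyze the quotient map $\pi \colon Y \to Y/\constgrpsch{G}$ as a pro-\'etale $\constgrpsch{G}$-torsor. Since $G$ is profinite and acts freely on $Y$, the action map $\constgrpsch{G} \times Y \rightrightarrows Y$ is a pro-\'etale equivalence relation---pro-\'etale because $\constgrpsch{G} \to \Spd(\algclosedperfdfield)$ is a cofiltered limit of finite \'etale covers---which identifies $Y/\constgrpsch{G}$ as a diamond by definition.

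To upgrade this diamond to a spatial one, I would verify the three topological conditions in turn. Quasicompactness of $|Y/\constgrpsch{G}|$ follows from surjectivity of $\pi$ together with quasicompactness of $|Y|$. Quasiseparatedness is where profiniteness of $G$ enters crucially: for any $U, V \to Y/\constgrpsch{G}$ from quasicompact diamonds, the pullback $(U \times_{Y/\constgrpsch{G}} V) \times_{Y/\constgrpsch{G}} Y$ is a pro-\'etale $\constgrpsch{G}$-torsor over $U \times_{Y/\constgrpsch{G}} V$, and quasicompactness transfers back down using that $\constgrpsch{G}$ is itself quasicompact. For the neighborhood-basis condition, I would take quasicompact opens in $|Y|$, saturate them under the $G$-action (which preserves quasicompactness thanks to compactness of $G$), and push them forward along $|\pi|$ to produce a basis of quasicompact opens in $|Y/\constgrpsch{G}|$.

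For the dimension equality, I would apply Proposition \ref{dimension of fibers for maps of locally spatial diamonds} to the surjection $\pi$. For each rank one point $z \in |Y/\constgrpsch{G}|$, the fiber $Y_z = Y \times_{Y/\constgrpsch{G}} z^\lozenge$ is a $\constgrpsch{G}$-torsor over $z^\lozenge$; since $z^\lozenge$ has dimension zero and $\constgrpsch{G}$ is topologically a profinite set, hence zero-dimensional as a diamond over $\Spd(\algclosedperfdfield)$, the fiber $Y_z$ has dimension zero as well. The proposition then delivers
\[\dim|Y/\constgrpsch{G}| = \dim|Y| - 0 = \dim Y.\]
I expect the main obstacle to be the careful verification of the neighborhood-basis condition in the spatiality check, since controlling how quasicompact opens descend through an infinite pro-\'etale torsor is the most delicate step; once spatiality is established, the dimension computation is a direct one-line invocation of the fiber-dimension proposition.
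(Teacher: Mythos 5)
This proposition is quoted from \cite[Lemma 3.2.3 and Lemma 3.3.4]{Arizona_extvb}, so the present paper does not supply a proof of its own; I can only assess your proposal on its merits.

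Your overall structure is reasonable: realize $Y/\constgrpsch{G}$ as the quotient by the pro-\'etale equivalence relation $\constgrpsch{G}\times Y\rightrightarrows Y$ coming from the free action, check spatiality by hand, then compute the dimension via fibers. The spatiality checks are on the right track: quasicompactness of the quotient is immediate from surjectivity, quasiseparatedness of the quotient pulls back along the $\constgrpsch{G}$-torsor $Y\to Y/\constgrpsch{G}$ using quasicompactness of $\constgrpsch{G}$, and the neighborhood-basis condition is handled by $G$-saturating quasicompact opens (which stays quasicompact because $G$ is compact) and pushing forward.

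The gap is in the dimension step. You invoke Proposition \ref{dimension of fibers for maps of locally spatial diamonds}, but that proposition requires the source and target to be \emph{partially proper} locally spatial diamonds over $\Spd(\algclosedperfdfield)$. Nothing in the hypotheses of the statement supplies partial properness of $Y$, let alone of $Y/\constgrpsch{G}$, so the invocation is not justified as written. The fiber analysis itself is fine (each fiber over a rank-one point is a $\constgrpsch{G}$-torsor over a point, hence zero-dimensional since a profinite set is zero-dimensional), but you need a dimension-counting statement that does not presuppose partial properness. A cleaner route is to argue directly at the level of spectral spaces: the induced map $|Y|\to|Y/\constgrpsch{G}|=|Y|/G$ is a surjective spectral quotient by the continuous action of the compact group $G$, hence closed, and its fibers are $G$-orbits, which are profinite and therefore have Krull dimension zero. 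A surjective closed spectral map with zero-dimensional fibers preserves Krull dimension: closedness lets you lift specialization chains from the target (giving $\dim|Y/\constgrpsch{G}|\le\dim|Y|$), and zero-dimensionality of the fibers forces any specialization chain in $|Y|$ to map to a chain of the same length in $|Y/\constgrpsch{G}|$ (giving the reverse inequality). This bypasses partial properness entirely and is almost certainly closer to what the cited lemma in \cite{Arizona_extvb} actually does.
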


\section{Semistable vector bundles arising from extensions}
  
\subsection{Moduli spaces of extensions}\label{moduli of extensions}$ $


In this subsection, we define and study diamonds that parametrize maps or extensions between two given vector bundles on $\adicff$. Let us denote by $\Perfd_{/\Spa (\algclosedperfdfield)}$ the category of perfectoid spaces over $\Spa(\algclosedperfdfield)$. By construction, the relative Fargues-Fontaine curve $\adicff_S$ for any $S \in \Perfd_{/\Spa (\algclosedperfdfield)}$ comes with a natural map $\adicff_S \to \adicff$. 

\begin{defn} Let $\Ecal$ and $\Fcal$ be vector bundles on the Fargues-Fontaine curve $\adicff$. For any $S \in \Perfd_{/\Spa(\algclosedperfdfield)}$, we write $\Ecal_S$ and $\Fcal_S$ for the pullbacks of $\Ecal$ and $\Fcal$ along the map $\adicff_S \to \adicff$.
\begin{enumerate}[label=(\arabic*)]
\item $\Hcal^i(\Ecal)$ is the pro-\'etale sheafification of the functor which associates to each $S \in \Perfd_{/\Spa (\algclosedperfdfield)}$ the group $H^i(\adicff_S, \Ecal_S)$. 
\smallskip

\item $\Hom(\mathcal{E},\Fcal)$ is the functor
which associates $S \in \Perfd_{/\Spa(\algclosedperfdfield)}$ to the set of $\trivbundle_{\adicff_S}$-module
maps $\Ecal_S\to\Fcal_S$. 
\smallskip

\item $\Surj(\Ecal, \Fcal)$ is the functor which associates $S \in \Perfd_{/\Spa(\algclosedperfdfield)}$ to the set of surjective $\trivbundle_{\adicff_S}$-module maps $\Ecal_S \surj \Fcal_S$. 
\smallskip

\item $\Surj(\Ecal, \Fcal)_\Dcal$ is the functor which associates $S \in \Perfd_{/\Spa(\algclosedperfdfield)}$ to the set of surjective $\trivbundle_{\adicff_S}$-module maps $\Ecal_S \surj \Fcal_S$ whose kernel becomes isomorphic to $\Dcal$ after pulling back along the map $\adicff_{\finext, \overline{x}} \to\adicff_{\finext, S}$ for any geometric point $\overline{x}$. 
\smallskip

\item $\Inj(\Ecal, \Fcal)$ is the functor which associates $S \in \Perfd_{/\Spa(\algclosedperfdfield)}$ to the set of 
$\trivbundle_{\adicff_S}$-module
maps $\Ecal_{S}\to\Fcal_{S}$ whose pullback along the map $\adicff_{\overline{x}} \to\adicff_S$ for any geometric point $\overline{x}
\to S$
gives an injective $\trivbundle_{\adicff_{\overline{x}}}$-module map.
\smallskip


\item $\Aut(\Ecal)$ is the functor which associates $S \in \Perfd_{/\Spa(\algclosedperfdfield)}$ to the group of $\trivbundle_{\adicff_S}$-module automorphisms of $\Ecal_S$.
\smallskip

\item $\Extdiamond(\Fcal, \Dcal)$ is the functor which associates to each $S \in \Perfd_{/\Spa (\algclosedperfdfield)}$ the set of isomorphism classes of extensions of $\Fcal_S$ by $\Dcal_S$. 
\smallskip

\item $\Extdiamond(\Fcal, \Dcal)_\Ecal$ is the functor which associates to each $S \in \Perfd_{/\Spa (\algclosedperfdfield)}$ the set of isomorphism classes of extensions of $\Fcal_S$ by $\Dcal_S$ whose pullback along the map $\adicff_{\finext, \overline{x}} \to\adicff_{\finext, S}$ for any geometric point $\overline{x}
\to S$ yields a short exact sequence 
\[ 0 \longrightarrow \Dcal \longrightarrow \Ecal \longrightarrow \Fcal \longrightarrow 0.\]
\end{enumerate}
\end{defn}

\begin{remark}
We have canonical identifications 
\[\Hom(\Ecal, \Fcal) \cong \Hcal^0(\Ecal^\vee \otimes \Fcal)\quad\quad \text{ and } \quad\quad \Extdiamond(\Fcal, \Dcal) \cong \Hcal^1(\Fcal^\vee \otimes \Dcal).\] 
\end{remark}




\begin{prop}[{\cite[Propositions 3.3.2, 3.3.5, 3.3.6, 3.3.7, and 3.3.13]{Arizona_extvb}}]\label{moduli of bundle maps fund facts}
Let $\Ecal$ and $\Fcal$ be vector bundles on $\adicff$. 
\begin{enumerate}[label=(\arabic*)]
\item If $\Ecal$ is semistable of slope $0$, then there is a natural identification $\Hcal^0(\Ecal) \cong  \constgrpsch{\Q_p}^{\rk(\Ecal)}$.
\smallskip

\item $\Hom(\Ecal, \Fcal)$ is a partially proper and locally spatial diamond over $\algclosedperfdfield$, equidimensional of dimension $\deg(\Ecal^\vee \otimes \Fcal)^\nonneg$. 
\smallskip

\item Every nonempty open subdiamond of $\Hom(\Ecal, \Fcal)$ has an $\algclosedperfdfield$-point. 
\smallskip

\item $\Surj(\Ecal, \Fcal)$ and $\Inj(\Ecal, \Fcal)$ are both open, partially proper and locally spatial subdiamonds of $\Hom(\Ecal, \Fcal)$. 
\smallskip

\item $\Aut(\Ecal)$ is a partially proper and locally spatial diamond over $\algclosedperfdfield$, equidimensional of dimension $\deg(\Ecal^\vee \otimes \Ecal)^\nonneg$. 
\smallskip

\item $\Surj(\Ecal, \Fcal)_\Dcal$ is a partially proper and locally spatial diamond over $\algclosedperfdfield$. 
\end{enumerate}
\end{prop}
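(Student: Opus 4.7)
The plan is to reduce statement (1) to the structure of Banach--Colmez spaces via tensor--hom adjunction and the HN decomposition, and to then obtain (2)--(4) as consequences using openness of surjectivity and invertibility.

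First I would identify, via tensor--hom adjunction, the $\trivbundle_{\schff_S}$-linear maps $\Ecal_S \to \Fcal_S$ with the global sections of $(\Ecal^\vee \otimes \Fcal)_S$, so that
\[
\Hom(\Ecal,\Fcal)(S) \;=\; H^0\bigl(\schff_S,(\Ecal^\vee \otimes \Fcal)_S\bigr).
\]
Using the HN decomposition $\Ecal^\vee \otimes \Fcal \simeq \bigoplus_i \Ocal(\lambda_i)^{\oplus m_i}$ of Theorem \ref{existence of HN decomp}, this splits $\Hom(\Ecal,\Fcal)$ as a finite product of Banach--Colmez spaces $\mathrm{BC}(\Ocal(\lambda_i))^{m_i}$. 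By Proposition \ref{cohomologies of stable bundles}, the factors with $\lambda_i < 0$ are trivial. For $\lambda_i = 0$, $\mathrm{BC}(\Ocal) \simeq \constgrpsch{\Q_p}$ is a constant pro-\'etale sheaf, hence a partially proper, locally spatial diamond of dimension $0$. For $\lambda_i > 0$, $\mathrm{BC}(\Ocal(\lambda_i))$ is known to be representable as a quotient of a perfectoid open ball by a pro-\'etale equivalence relation, yielding a partially proper, locally spatial diamond over $\Spd(\algclosedperfdfield)$ equidimensional of dimension $\deg(\Ocal(\lambda_i))$. Since all of these properties are preserved by finite products, with dimensions adding, this gives (1) together with the formula $\dim \Hom(\Ecal,\Fcal) = \deg(\Ecal^\vee \otimes \Fcal)^\nonneg$.

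Statement (2) follows from the same presentations: perfectoid open balls over $\algclosedperfdfield$ have topologically dense classical $\algclosedperfdfield$-points, and constant pro-\'etale sheaves are tautologically covered by their $\algclosedperfdfield$-points. Both properties are inherited by finite products and by nonempty open subdiamonds. For (3), both surjectivity and generic injectivity of a map of vector bundles on the relative curve $\schff_S$ are open conditions on $S$: the non-surjective locus is the image under the proper map $\schff_S \to S$ of the support of the cokernel, and the non-generically-injective locus consists of the geometric points at which the map vanishes identically. Hence $\Surj(\Ecal,\Fcal)$ and $\Inj(\Ecal,\Fcal)$ are open subdiamonds of $\Hom(\Ecal,\Fcal)$ and inherit partial properness and local spatiality.

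Finally, for (4), $\Aut(\Ecal)$ coincides with $\Surj(\Ecal,\Ecal) \cap \Inj(\Ecal,\Ecal)$, since for bundles of equal rank a map that is both surjective and generically injective must be an isomorphism; thus $\Aut(\Ecal)$ is an open subdiamond of $\Hom(\Ecal,\Ecal)$ containing the identity. Equidimensionality of dimension $\deg(\Ecal^\vee \otimes \Ecal)^\nonneg$ follows because, in the HN-decomposition product presentation of $\Hom(\Ecal,\Ecal)$, the complement of $\Aut(\Ecal)$ is cut out by vanishing of determinants on the equal-slope diagonal blocks and is thus of strictly smaller dimension on every component. The principal obstacle in executing this plan is establishing the representability and the dimension formula for the positive-slope Banach--Colmez space $\mathrm{BC}(\Ocal(\lambda))$, which ultimately requires combining the perfectoid presentation of $\Ycal$ with the properly discontinuous Frobenius action; once this building block is in hand, the remaining assertions reduce to standard manipulations with open subdiamonds and the dimension theory recalled in Section \ref{diamonds}.
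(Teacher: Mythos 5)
The paper does not prove this proposition---it is quoted directly from \cite{Arizona_extvb} (Propositions 3.3.2, 3.3.5, 3.3.6, 3.3.7)---so there is no internal proof to compare against. That said, your outline follows the expected route from the cited reference: identify $\Hom(\Ecal,\Fcal)$ with $\Hcal^0(\Ecal^\vee\otimes\Fcal)$, split via the HN decomposition into Banach--Colmez factors $\mathrm{BC}(\trivbundle(\lambda_i))^{\oplus m_i}$, use Proposition \ref{cohomologies of stable bundles} to discard the negative slopes, invoke known presentations for $\lambda_i\geq 0$, and then derive (2)--(4) from density of $\algclosedperfdfield$-points and from openness. You are also right that the hard technical input is representability and the dimension formula for $\mathrm{BC}(\trivbundle(\lambda))$ with $\lambda>0$.

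There is, however, a concrete error in your treatment of $\Inj(\Ecal,\Fcal)$ in part (3). You assert that ``the non-generically-injective locus consists of the geometric points at which the map vanishes identically.'' This is false except when $\rk(\Ecal)=1$. For a bundle map $\phi_{\overline{x}}\colon\Ecal_{\overline{x}}\to\Fcal_{\overline{x}}$, failure of injectivity means $\ker(\phi_{\overline{x}})\neq 0$, equivalently that $\phi_{\overline{x}}$ has rank $<\rk(\Ecal)$ at the generic point of $\schff_{\overline{x}}$; the map can easily be nonzero while having a nonzero kernel (a projection $\trivbundle^{\oplus 2}\twoheadrightarrow\trivbundle$, say). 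The correct openness argument is a rank-semicontinuity one: the generic rank of $\phi_{\overline{x}}$ is lower semicontinuous in $\overline{x}$, and $\Inj$ is the locus where this rank attains its maximum value $\rk(\Ecal)$. Relatedly, your appeal to ``the proper map $\schff_S\to S$'' is an abuse: the relevant morphism is the adic one $\adicff_S\to S$, which is quasicompact and partially proper (not proper), and that suffices to show the support of the cokernel has closed image. These are fixable, but as written the argument for openness of $\Inj(\Ecal,\Fcal)$ does not go through.
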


\begin{remark}
The work of Le Bras \cite{LeBras_BCspaces} shows that the diamonds $\Hcal^i(\Ecal)$ and $\Hom(\Ecal, \Fcal)$ also have the structure of Banach-Colmez spaces as defined by Colmez \cite{Colmez_BCspace}. 
\end{remark}

\begin{prop}[{\cite[Theorem 1.1.2]{Arizona_extvb}}]\label{extension theorem with semistable kernel and cokernel}
Let $\Dcal$, $\Ecal$, and $\Fcal$ be vector bundles on $\adicff$ such that $\Dcal$ and $\Fcal$ are semistable with $\mu(\Dcal) < \mu(\Fcal)$. 
There exists a short exact sequence
\[ 0 \longrightarrow \Dcal \longrightarrow \Ecal \longrightarrow \Fcal \longrightarrow 0\]
if and only if we have $\HN(\Ecal) \leq \HN(\Dcal \oplus \Fcal)$. 
\end{prop}

\begin{prop}\label{H1 diamond is locally spatial}
Let $\Ecal$ be a vector bundle on $\adicff$ with $\mumax(\Ecal)<0$. 
\begin{enumerate}[label=(\arabic*)]
\item $\Hcal^1(\Ecal)$ is a partially proper and locally spatial diamond over $\algclosedperfdfield$, equidimensional of dimension $\deg(\Ecal^\vee)^\nonneg$. 
\smallskip

\item Every nonempty open subdiamond of $\Hcal^1(\Ecal)$ has an $\algclosedperfdfield$-point. 
\end{enumerate}
\end{prop}

\begin{proof}
Let us write the HN decomposition of $\Ecal$ as
\[ \Ecal \simeq \bigoplus_{i=1}^l \trivbundle(\lambda_i)^{\oplus m_i}\]
with $\lambda_i<0$ for each $i = 1, \cdots, l$. 
We also set
\[ r_i:= \rk\left(\trivbundle(\lambda_i)\right) \quad\quad \text{ and } \quad\quad d_i := \deg\left(\trivbundle(\lambda_i)\right).\]
By Proposition \ref{extension theorem with semistable kernel and cokernel}, each $\trivbundle(\lambda_i)^{\oplus m_i}$ fits into a short exact sequence
\[ 0 \longrightarrow \trivbundle(\lambda_i)^{\oplus m_i} \longrightarrow \trivbundle^{\oplus m_i(r_i - d_i)} \longrightarrow \trivbundle(1)^{\oplus - m_i d_i} \longrightarrow 0.\]
We take the direct sum of all such exact sequences to obtain a short exact sequence
\begin{equation}\label{H1 diamond key short exact sequence}
0 \longrightarrow \Ecal \longrightarrow \trivbundle^{\oplus (r-d)} \longrightarrow \trivbundle(1)^{\oplus -d} \longrightarrow 0
 \end{equation}
with $r = \rk(\Ecal)$ and $d = \deg(\Ecal)$, and consequently get a long exact sequence
\[0 \longrightarrow \Hcal^0(\Ecal) \longrightarrow \Hcal^0(\trivbundle^{\oplus (r-d)}) \longrightarrow \Hcal^0(\trivbundle(1)^{\oplus -d}) \longrightarrow \Hcal^1(\Ecal) \longrightarrow \Hcal^1(\trivbundle^{\oplus (r-d)}).\]
Moreover, by Proposition \ref{cohomologies of stable bundles} and Proposition \ref{moduli of bundle maps fund facts} we have
\[ \Hcal^0(\Ecal) = 0, \quad\quad \Hcal^0(\trivbundle^{\oplus (r-d)}) \cong \constgrpsch{\Q_p}^{r-d}, \quad\quad \Hcal^1(\trivbundle^{\oplus (r-d)}) = 0.\]
We thus find a presentation
\[ \Hcal^1(\Ecal) \simeq \Hcal^0(\trivbundle(1)^{\oplus -d})/ \constgrpsch{\Q_p}^{r-d} \simeq \Hom(\trivbundle, \trivbundle(1)^{\oplus -d})/ \constgrpsch{\Q_p}^{r-d},\]
thereby deducing the desired statements by Proposition \ref{quotient diamond by an action of a profinite group} and Proposition \ref{moduli of bundle maps fund facts}. 
\end{proof}

\begin{remark}
The above argument is largely inspired by the proof of \cite[Proposition 3.3.2]{Arizona_extvb}. It is also presented by Hansen at the Montreal workshop for the geometrization of the local Langlands program held in 2019. 

It is worthwhile to note that our use of Proposition \ref{extension theorem with semistable kernel and cokernel} is not essential and is only for brevity. For example, we can prove Proposition \ref{H1 diamond is locally spatial} based only on some elementary properties of the Fargues-Fontaine curve, as in the work of Fargues-Scholze \cite[Proposition II.2.5]{FS_geomLL}. In fact, Fargues-Scholze \cite[Theorem II.2.14]{FS_geomLL} uses a special case of this result in an essential way to give a new conceptual proof of Theorem \ref{existence of HN decomp}. 
\end{remark}




\begin{lemma}[{\cite[Lemma 3.3.14]{Arizona_extvb}}]\label{Sur(E, F)_D dimension formula}
Let $\Dcal, \Ecal$, and $\Fcal$ be vector bundles on $\adicff$ such that $\Fcal$ is semistable. Then $\Surj(\Ecal, \Fcal)_\Dcal$ is either empty or equidimensional with
\[ \dim \Surj(\Ecal, \Fcal)_\Dcal = \deg(\Dcal^\vee \otimes \Ecal)^\nonneg - \deg(\Dcal^\vee \otimes \Dcal)^\nonneg.\]
\end{lemma}

\begin{remark}
The diamond $\Surj(\Ecal, \Fcal)_\Dcal$ is quite obscure if $\Fcal$ is not semistable. For instance, we are highly doubtful that $\Surj(\Ecal, \Fcal)_\Dcal$ admits an explicit dimension formula when $\Fcal$ is not semistable. 
\end{remark}

\begin{prop}\label{Ext(F, D)_E dimension formula}
Let $\Dcal, \Ecal$, and $\Fcal$ be vector bundles on $\adicff$ with $\mumax(\Dcal) < \mumin(\Fcal)$. 
%
\begin{enumerate}[label=(\arabic*)]
\item $\Extdiamond(\Fcal, \Dcal)_\Ecal$ is a partially proper and locally spatial diamond over $\algclosedperfdfield$. 
\smallskip

\item If $\Ecal$ is semistable, then $\Extdiamond(\Fcal, \Dcal)_\Ecal$ is an open subdiamond of $\Extdiamond(\Fcal, \Dcal)$. 
\smallskip

\item If $\Fcal$ is semistable, then $\Extdiamond(\Fcal, \Dcal)_\Ecal$ is either empty or equidimensional with 
\[\dim \Extdiamond(\Fcal, \Dcal)_\Ecal = \deg(\Dcal^\vee \otimes \Ecal)^\nonneg  - \deg(\Ecal^\vee \otimes \Ecal)^\nonneg.\]
\end{enumerate}
\end{prop}

\begin{proof}
By Proposition \ref{basic properties of stable bundles}, we find that all slopes of $\Fcal^\vee \otimes \Dcal$ are negative. Hence Proposition \ref{H1 diamond is locally spatial} implies that $\Extdiamond(\Fcal, \Dcal) \cong \Hcal^1(\Fcal^\vee \otimes \Dcal)$ is a locally spatial diamond over $\algclosedperfdfield$. Let us choose a presentation $\Extdiamond(\Fcal, \Dcal) \simeq T/R$ for some perfectoid space $T$ and a pro-\'etale equivalence relation $R$. Let $\Vcal$ be the vector bundle on $\schff_T$ which fits into the ``universal" exact sequence
\[0 \longrightarrow \Dcal_T \longrightarrow \Vcal \longrightarrow \Fcal_T \longrightarrow 0.\]
We define 
\begin{align*}
|T|_{\leq \HN(\Ecal)}&:= \Set{ x \in |T|: \HN(\Vcal_x) \leq \HN(\Ecal)}, \\
|T|_{\geq \HN(\Ecal)}&:= \Set{ x \in |T|: \HN(\Vcal_x) \geq \HN(\Ecal)}.
\end{align*}
By the result of Kedlaya-Liu \cite[Theorem 7.4.5]{KL_relpadic1}, 
the set $|T|_{\leq \HN(\Ecal)}$ (resp. $|T|_{\geq \HN(\Ecal)}$) is open (resp. closed) in $|T|$. Moreover, both $|T|_{\leq \HN(\Ecal)}$ and $|T|_{\geq \HN(\Ecal)}$ are stable under generalizations. 
Hence the image of $|T|_{\leq \HN(\Ecal)} \cap |T|_{\geq \HN(\Ecal)}$ under the quotient map $|T| \to |\Extdiamond(\Fcal, \Dcal)|$ is a locally closed and generalizing subset $|\Extdiamond(\Fcal, \Dcal)|_{\HN(\Ecal)}$ of $|\Extdiamond(\Fcal, \Dcal)|$. Adapting the argument of Scholze \cite[Proposition 11.20]{Scholze_diamonds},
we find that $|\Extdiamond(\Fcal, \Dcal)|_{\HN(\Ecal)}$ gives rise to a locally spatial subdiamond $\Extdiamond(\Fcal, \Dcal)_{\HN(\Ecal)}$ of $\Extdiamond(\Fcal, \Dcal)$ with an identification
\begin{equation*}\label{Ext(F, D)_E identification as a HN stratum}
\Extdiamond(\Fcal, \Dcal)_{\HN(\Ecal)} \cong \Extdiamond(\Fcal, \Dcal)_\Ecal
\end{equation*}
as a functor on $\Perfd_{/\Spa (\algclosedperfdfield)}$. Therefore we deduce that $\Extdiamond(\Fcal, \Dcal)_\Ecal$ is a locally spatial diamond over $\algclosedperfdfield$, and also obtain its partial properness from the result of Kedlaya-Liu \cite[Theorem 8.7.7]{KL_relpadic1}.
Moreover, if $\Ecal$ is semistable, then we have $|T|_{\geq \HN(\Ecal)} = |T|$ and consequently find that $\Extdiamond(\Fcal, \Dcal)_\Ecal \cong \Extdiamond(\Fcal, \Dcal)_{\HN(\Ecal)}$ is an open subdiamond of $\Extdiamond(\Fcal, \Dcal)$. 

For the last  statement, let us now assume that $\Fcal$ is semistable. Let $\widetilde{\Extdiamond}(\Fcal, \Dcal)_\Ecal$ be the functor which associates each $S \in \Perfd_{/\Spa (\algclosedperfdfield)}$ to the set of isomorphism classes of short exact sequences
\[ 0 \longrightarrow \Dcal_S \longrightarrow \Ecal_S \longrightarrow \Fcal_S \longrightarrow 0.\]
We may identify $\widetilde{\Extdiamond}(\Fcal, \Dcal)_\Ecal$ as an $\Aut(\Ecal)$-torsor over $\Extdiamond(\Fcal, \Dcal)_\Ecal$ by rigidifying an extension of $\Fcal_S$ by $\Dcal_S$ for each $S \in \Perfd_{/\Spa (\algclosedperfdfield)}$. Similarly, we may identify $\widetilde{\Extdiamond}(\Fcal, \Dcal)_\Ecal$ as an $\Aut(\Dcal)$-torsor over $\Surj(\Ecal, \Fcal)_\Dcal$ by rigidifying a surjective map $\Ecal_S \surj \Fcal_S$ (and its kernel) for each $S \in \Perfd_{/\Spa (\algclosedperfdfield)}$. Therefore, if $\Extdiamond(\Fcal, \Dcal)_\Ecal$ is not empty, we find
\begin{align*}
\dim \Extdiamond(\Fcal, \Dcal)_\Ecal &= \dim \widetilde{\Extdiamond}(\Fcal, \Dcal)_\Ecal - \dim \Aut(\Ecal)\\
&= \dim \Surj(\Ecal, \Fcal)_\Dcal + \dim \Aut(\Dcal) - \dim \Aut(\Ecal) \\
&= \deg(\Dcal^\vee \otimes \Ecal)^\nonneg  - \deg(\Ecal^\vee \otimes \Ecal)^\nonneg
\end{align*}
by Proposition \ref{quotient diamond by an action of a profinite group}, Proposition \ref{moduli of bundle maps fund facts}, and Lemma \ref{Sur(E, F)_D dimension formula}. 
\end{proof}



\subsection{Main theorem}\label{main theorem}$ $

We now aim to establish our main result classifying all vector bundles on $\schff$ which realize a given semistable vector bundle as their extension. 

\begin{defn}
Let $\Vcal$ be a vector bundle on $\schff$ with HN decomposition
\[ \Vcal \simeq \bigoplus_{i=1}^l \trivbundle(\lambda_i)^{\oplus m_i}\]
where the $\lambda_i$'s are in strictly descending order. We define the \emph{HN vectors} of $\Vcal$ by
\[\HNvec(\Vcal):= (v_i)_{1 \leq i \leq l}\]
where $v_i:= \big(\rk(\trivbundle(\lambda_i)^{\oplus m_i}), \deg(\trivbundle(\lambda_i)^{\oplus m_i})\big)$ is the vector that represents the $i$-th line segment in $\HN(\Vcal)$, and write $\mu(v_i):= \lambda_i$ for the slope of $v_i$. 
\end{defn}
\begin{center}
\begin{figure}[h]
\begin{tikzpicture}[scale=0.7]
		\coordinate (left) at (0, 0);
		\coordinate (q1) at (2.5, 3.5);
		\coordinate (q2) at (4, 4);
		\coordinate (q3) at (5, 3.7);

		\coordinate (p0) at (1, 3);
		\coordinate (p1) at (4, 5);
		\coordinate (p2) at (7, 4.5);
		\coordinate (p3) at (9, 2.5);
				

		\draw[step=1cm,thick] (left) -- node[left] {$v_1$} (p0);
		\draw[step=1cm,thick] (p0) -- node[above] {$v_2$} (p1);
		\draw[step=1cm,thick] (p1) -- node[above] {$v_3$} (p2);
		\draw[step=1cm,thick] (p2) -- node[right] {$v_4$} (p3);

		\draw [fill] (left) circle [radius=0.05];
		
		\draw [fill] (p0) circle [radius=0.05];		
		\draw [fill] (p1) circle [radius=0.05];		
		\draw [fill] (p2) circle [radius=0.05];		
		\draw [fill] (p3) circle [radius=0.05];		


		
		\path (p3) ++(1, 0) node {$\HN(\Vcal)$};


\end{tikzpicture}
\setlength{\belowcaptionskip}{-0.3in}
\caption{Vector representation of $\HN(\Vcal)$.}\label{diamond_Lemma_fig}
\end{figure}
\end{center}

\begin{lemma}[{\cite[Lemma 2.3.4]{Arizona_extvb}}]\label{geometric representation of degrees}
Let $\Ecal$ and $\Fcal$ be vector bundles on $\schff$ with HN vectors $\HNvec(\Ecal) = (e_i)$ and $\HNvec(\Fcal) = (f_j)$. 
Then we have an identity
\[ \deg(\Ecal^\vee \otimes \Fcal)^\nonneg = \sum_{\mu(e_i) \leq \mu(f_j)} e_i \times f_j\]
where $e_i \times f_j$ denotes the two-dimensional cross product of the vectors $e_i$ and $f_j$. 
\end{lemma}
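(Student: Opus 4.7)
The plan is a direct calculation via the Harder–Narasimhan decompositions of $\Ecal$ and $\Fcal$. Writing $\Ecal \simeq \bigoplus_i \trivbundle(\lambda_i)^{\oplus m_i}$ and $\Fcal \simeq \bigoplus_j \trivbundle(\mu_j)^{\oplus n_j}$ with $\lambda_i = \mu(e_i)$ and $\mu_j = \mu(f_j)$, one expands
$$\Ecal^\vee \otimes \Fcal \simeq \bigoplus_{i,j}\bigl(\trivbundle(-\lambda_i)\otimes\trivbundle(\mu_j)\bigr)^{\oplus m_i n_j}.$$
The key input is that tensor products of semistable bundles on $\schff$ are semistable with additive slope, so each summand is semistable of slope $\mu_j-\lambda_i$; uniqueness of the HN decomposition then gives
$$(\Ecal^\vee\otimes\Fcal)^{\geq 0} \simeq \bigoplus_{\mu(e_i)\leq\mu(f_j)}\bigl(\trivbundle(-\lambda_i)\otimes\trivbundle(\mu_j)\bigr)^{\oplus m_i n_j}.$$

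The second step is to match the degree of each surviving summand with the cross product $e_i\times f_j$. This is a short computation using the bilinear identity $\deg(V\otimes W) = \rk(V)\deg(W) + \deg(V)\rk(W)$ together with $\deg(\trivbundle(-\lambda_i)) = -\deg(\trivbundle(\lambda_i))$ and $\rk(\trivbundle(-\lambda_i)) = \rk(\trivbundle(\lambda_i))$; after absorbing the multiplicities $m_i,n_j$ into the HN vectors $e_i = (r_i,d_i)$ and $f_j = (s_j,t_j)$, the $(i,j)$-th summand has degree $r_it_j - d_is_j$, which is exactly $e_i\times f_j$. Summing over the admissible index pairs then gives the claimed identity.

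There is no real obstacle here; the argument is essentially bookkeeping once one invokes the classification of vector bundles on $\schff$ and the semistability of tensor products of semistables. The only point requiring a moment's care is the identification of $(\Ecal^\vee\otimes\Fcal)^{\geq 0}$ with the slope-filtered tensor pieces, which must be justified via uniqueness of the HN decomposition rather than simply regrouping summands by slope.
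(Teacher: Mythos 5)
Your argument is correct and is the natural proof of this lemma; the paper cites it from \cite{Arizona_extvb} without reproducing an argument, but the route you take — expand $\Ecal^\vee\otimes\Fcal$ via the HN decompositions, invoke semistability of tensor products of semistables to isolate $(\Ecal^\vee\otimes\Fcal)^{\geq 0}$, and then apply the degree–rank bilinearity $\deg(V\otimes W)=\rk(V)\deg(W)+\deg(V)\rk(W)$ term by term — is precisely the expected bookkeeping. In particular your observation that the identification of $(\Ecal^\vee\otimes\Fcal)^{\geq 0}$ with the subsum over pairs $\mu(e_i)\leq\mu(f_j)$ rests on uniqueness of the HN decomposition (not mere regrouping) is the one step that actually requires citing structure theory, and you handle it correctly.
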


\begin{proof}
The assertion is straightforward to verify using Lemma \ref{basic properties of stable bundles}. 
\end{proof}

\begin{remark}
Recall that the two-dimensional cross product of two vectors $v = (x_1, y_1)$ and $w = (x_2, y_2)$ is defined by $v \times w := x_1 y_2 - x_2 y_1$. 
\end{remark}

\begin{lemma}\label{nonneg degree for dominating HN polygons}
Let $\Ecal$ and $\Fcal$ be vector bundles on $\schff$ such that $\HN(\Ecal)$ lies on or below $\HN(\Fcal)$. For every vector bundle $\Qcal$ on $\schff$, we have
\[ \deg(\Qcal^\vee \otimes \Ecal)^\nonneg \leq \deg(\Qcal^\vee \otimes \Fcal)^\nonneg.\]
\end{lemma}

\begin{proof}
It suffices to consider the case where we have $\Qcal = \trivbundle(\lambda)$ for some $\lambda \in \Q$, as the general case will follow from this special case using the HN decomposition of $\Qcal$. Then we note by Lemma \ref{basic properties of stable bundles} that $\HN(\Qcal^\vee \otimes \Ecal)$ and $\HN(\Qcal^\vee \otimes \Fcal)$ are respectively obtained from $\HN(\Ecal)$ and $\HN(\Fcal)$ via the composition of the following transformations:
\begin{itemize}
\item a shear transformation that makes each slope decrease by $\lambda$, and
\smallskip

\item a dilation by the factor $\rk(\Qcal) = \rk(\trivbundle(\lambda))$.
\end{itemize}
Hence we find that $\HN(\Qcal^\vee \otimes \Ecal)$ lies on or below $\HN(\Qcal^\vee \otimes \Fcal)$, and in turn deduce the desired inequality by observing that $\deg(\Qcal^\vee \otimes \Ecal)^\nonneg$ and $\deg(\Qcal^\vee \otimes \Fcal)^\nonneg$ respectively represent the maximum $y$-coordinates of $\HN(\Qcal^\vee \otimes \Ecal)$ and $\HN(\Qcal^\vee \otimes \Fcal)$. 
\end{proof}

\begin{prop}\label{key inequality for semistable quotient}
Let $\Dcal, \Ecal, \Fcal$, and $\Vcal$ be vector bundles on $\schff$ with the following properties:
\begin{enumerate}[label=(\roman*)]
\item\label{dominant HN polygons for kernel} $\HN(\Dcal)$ lies on or below $\HN(\Vcal)$. 
\smallskip


\item\label{HN polygon inequality for extensions, aux} $\HN(\Ecal) \leq \HN(\Vcal) \leq \HN(\Dcal \oplus \Fcal)$ with $\Ecal$ and $\Fcal$ being semistable.
\smallskip


\item\label{slope dominance between kernel and image, aux} $\mumax(\Dcal) \leq \mu(\Ecal) \leq \mu(\Fcal)$. 
\end{enumerate}
Then we have an inequality
\[\deg(\Dcal^\vee \otimes \Vcal)^\nonneg - \deg(\Vcal^\vee \otimes \Vcal)^\nonneg \leq \deg(\Dcal^\vee \otimes \Fcal)^\nonneg\]
with equality if and only if $\Vcal$ is isomorphic to $\Ecal$. 
\end{prop}

\begin{proof}
The desired inequality can be written as
\begin{equation*}\label{key inequality for step 2 alt form} 
\deg(\Dcal^\vee \otimes \Vcal)^\nonneg  - \deg(\Dcal^\vee \otimes \Fcal)^\nonneg \leq \deg(\Vcal^\vee \otimes \Vcal)^\nonneg.
\end{equation*}
In addition, we have
\begin{align*} 
\deg\left((\Vcal^{\leq \mu(\Ecal)})^\vee \otimes \Vcal\right)^\nonneg \leq \deg\left((\Vcal^{\leq \mu(\Ecal)})^\vee \otimes \Vcal\right)^\nonneg + \deg\left((\Vcal^{> \mu(\Ecal)})^\vee \otimes \Vcal\right)^\nonneg 
= \deg(\Vcal^\vee \otimes \Vcal)^\nonneg
\end{align*}
where equality holds if and only if $\Vcal^{> \mu(\Ecal)}$ is zero, which occurs precisely when $\Vcal$ and $\Ecal$ are isomorphic by the condition \ref{HN polygon inequality for extensions, aux}.
Hence it suffices to show
\begin{equation}\label{key inequality for step 2 alt form, reduced}
\begin{aligned}
\deg(\Dcal^\vee \otimes \Vcal)^\nonneg - \deg(\Dcal^\vee \otimes \Fcal)^\nonneg & \leq \deg\left((\Vcal^{< \mu(\Ecal)})^\vee \otimes \Vcal\right)^\nonneg.
\end{aligned}
\end{equation}

Let us write $\HNvec(\Dcal) := (d_i), \HNvec(\Fcal) := (f)$, and $\HNvec(\Vcal) := (v_j)$. By the condition \ref{HN polygon inequality for extensions, aux}, we have $f = \sum v_j - \sum d_i$. Then by Lemma \ref{geometric representation of degrees} and the condition \ref{slope dominance between kernel and image, aux} we find
\begin{align*}
\deg(\Dcal^\vee \otimes \Vcal)^\nonneg - \deg(\Dcal^\vee \otimes \Fcal)^\nonneg &= \sum_{\mu(d_i) \leq \mu(v_j)} d_i \times v_j - \sum d_i \times (\sum v_j - \sum d_i)\\
&= \sum_{\mu(d_i) \leq \mu(v_j)} d_i \times v_j - \sum d_i \times \sum v_j \\
&= - \sum_{\mu(d_i) > \mu(v_j)} d_i \times v_j = \sum_{ \mu(v_j) < \mu(d_i)} v_j \times d_i\\
&= \deg\left((\Vcal^{< \mu(\Ecal)})^\vee \otimes \Dcal \right)^\nonneg.
\end{align*}
Moreover, by Lemma \ref{nonneg degree for dominating HN polygons} and the condition \ref{dominant HN polygons for kernel} we have
\begin{equation*}\label{key inequality for step 2 alt form, reduced vector form} 
\begin{aligned}
\deg\left((\Vcal^{< \mu(\Ecal)})^\vee \otimes \Dcal \right)^\nonneg  & \leq \deg\left((\Vcal^{< \mu(\Ecal)})^\vee \otimes \Vcal\right)^\nonneg.
\end{aligned}
\end{equation*}
We thus deduce the desired inequality \eqref{key inequality for step 2 alt form, reduced}, thereby completing the proof. 
\end{proof}

\begin{lemma}\label{extension necessary conditions}
Let $\Dcal$, $\Ecal$, and $\Fcal$ be vector bundles on $\schff$ which fit into a short exact sequence
\[ 0 \longrightarrow \Dcal \longrightarrow \Ecal \longrightarrow \Fcal \longrightarrow 0.\]
\begin{enumerate}[label=(\arabic*)]
\item We have $\mumax(\Dcal) \leq \mumax(\Ecal)$, $\mumin(\Ecal) \leq \mumin(\Fcal)$, and $\HN(\Ecal) \leq \HN(\Dcal \oplus \Fcal)$. 
\smallskip

\item $\HN(\Dcal)$ lies below or on $\HN(\Ecal)$. 
\end{enumerate}
\end{lemma}

\begin{proof}
The nonzero maps $\Dcal \inj \Ecal$ and $\Ecal \surj \Fcal$ yield the first two inequalities by Lemma \ref{basic properties of stable bundles} and Proposition \ref{cohomologies of stable bundles}. The remaining assertions are standard consequences of the Harder-Narasimhan formalism, as noted by Kedlaya \cite[Lemma 3.4.15 and Lemma 3.4.17]{Kedlaya_arizona}. 
\end{proof}

\begin{remark}
In fact, the nonzero maps $\Dcal \inj \Ecal$ and $\Ecal \surj \Fcal$ yield much stronger conditions than the first two inequalities in Lemma \ref{extension necessary conditions}, as noted by the author in the previous works \cite[Theorem 1.1.2]{Hong_quotvb} and \cite[Theorem 1.2.1]{Hong_subvb}. 
\end{remark}

\begin{theorem}\label{main theorem}
Let $\Dcal, \Ecal$, and $\Fcal$ be vector bundles on $\schff$ such that $\Ecal$ is semistable. There exists a short exact sequence
\begin{equation}\label{main theorem short exact sequence}
0 \longrightarrow \Dcal \longrightarrow \Ecal \longrightarrow \Fcal \longrightarrow 0
\end{equation}
if and only if we have $\mumax(\Dcal) \leq \mu(\Ecal) \leq \mumin(\Fcal)$ and $\HN(\Ecal) \leq \HN(\Dcal \oplus \Fcal)$. 
\end{theorem}

\begin{proof}
The necessity part immediately follows from Lemma \ref{extension necessary conditions}. For the sufficiency part, we henceforth assume the inequalities $\mumax(\Dcal) \leq \mu(\Ecal) \leq \mumin(\Fcal)$ and $\HN(\Ecal) \leq \HN(\Dcal \oplus \Fcal)$. Let us write $r$ for the number of distinct slopes in $\HN(\Fcal)$ and proceed by induction on $r$. 

We first consider the base case where $\HN(\Fcal)$ is a line segment, which means by Theorem \ref{existence of HN decomp} that $\Fcal$ is semistable. Since all slopes of $\Fcal^\vee \otimes \Dcal$ are negative by Proposition \ref{basic properties of stable bundles}, we know by Proposition \ref{H1 diamond is locally spatial} that $\Extdiamond(\Fcal, \Dcal) \cong \Hcal^1(\Fcal^\vee \otimes \Dcal)$ is a locally spatial diamond over $\algclosedperfdfield$ with 
\begin{equation*}\label{dimension of Ext(F, D)_E for semistable F}
\dim \Extdiamond(\Fcal, \Dcal) = \deg(\Dcal^\vee \otimes \Fcal)^\nonneg.
\end{equation*}
Let $T$ be the set of all isomorphism classes of vector bundles $\Vcal$ on $\schff$ which fit into a short exact sequence
\[0 \longrightarrow \Dcal \longrightarrow \Vcal \longrightarrow \Fcal \longrightarrow 0.\] 
Proposition \ref{Ext(F, D)_E dimension formula}, Proposition \ref{key inequality for semistable quotient} and Lemma \ref{extension necessary conditions} together yield
\begin{align*}
\dim \Extdiamond(\Fcal, \Dcal)_\Vcal \leq \dim \Extdiamond(\Fcal, \Dcal) \quad\quad \text{ for each } \Vcal \in T
\end{align*}
where equality may hold only for $\Vcal = \Ecal$. Moreover, we have a decomposition
\[ |\Extdiamond(\Fcal, \Dcal)| = \bigsqcup_{\Vcal \in T} |\Extdiamond(\Fcal, \Dcal)_\Vcal|. \]
Since $T$ is a finite set by Lemma \ref{extension necessary conditions}, we find
\[\dim \Extdiamond(\Fcal, \Dcal) = \max_{\Vcal \in T} \,\dim \Extdiamond(\Fcal, \Dcal)_\Vcal\]
and consequently deduce that $T$ must contain the isomorphism class of $\Ecal$ as desired.

We now assume $r>1$ for the induction step. Let us write $\HNvec(\Dcal) := (d_i)$, $\HNvec(\Ecal):= (e)$ and $\HNvec(\Fcal) := (f_j)$. We find $\mu(f_r) \geq \mu(e) > \mu(f_r - \sum d_i)$ by our assumption on the HN polygons. 
Take $s$ to be the largest integer with $\displaystyle \mu\left(f_r +\sum_{i \leq s} d_i\right) \geq \mu(e)$, and set $\displaystyle e':= e - f_r - \sum_{i \leq s} d_i$. Define the vector bundles $\genred{\Dcal}$, $\Dcal'$, $\Ecal'$, $\genred{\Fcal}$, and $\Fcal'$ by
\[ \HNvec(\Dcal') = (d_i)_{i \leq s}, \quad \HNvec(\genred{\Dcal}) = (d_i)_{i >s}, \quad \HNvec(\Ecal') = (e'), \quad \HNvec(\genred{\Fcal}) = (f_j)_{j<r}, \quad \HNvec(\Fcal') = (f_r)\]
as illustrated in Figure \ref{induction illustration}. 
\begin{figure}[H]
\begin{tikzpicture}[scale=0.5]	
		\pgfmathsetmacro{\donex}{3}
		\pgfmathsetmacro{\doney}{0}
		\pgfmathsetmacro{\dtwox}{2.5}
		\pgfmathsetmacro{\dtwoy}{-2.5}
		\pgfmathsetmacro{\dthreex}{1}
		\pgfmathsetmacro{\dthreey}{-4.5}
		
		\pgfmathsetmacro{\fonex}{0.5}
		\pgfmathsetmacro{\foney}{3}	
		\pgfmathsetmacro{\ftwox}{1.5}
		\pgfmathsetmacro{\ftwoy}{3}
		\pgfmathsetmacro{\fthreex}{3}
		\pgfmathsetmacro{\fthreey}{2}
		
		\pgfmathsetmacro{\ex}{\donex+\dtwox+\dthreex+\fonex+\ftwox+\fthreex}
		\pgfmathsetmacro{\ey}{\doney+\dtwoy+\dthreey+\foney+\ftwoy+\fthreey}
				
		\coordinate (left) at (0, 0);
		\coordinate (p1) at (\fonex, \foney);
		\coordinate (p2) at (\fonex+\ftwox, \foney+\ftwoy);
		\coordinate (p3) at (\fonex+\ftwox+\fthreex, \foney+\ftwoy+\fthreey);

		\coordinate (q1) at (\ex-\dtwox-\dthreex, \ey-\dtwoy-\dthreey);
		\coordinate (q2) at (\ex-\dthreex, \ey-\dthreey);
		\coordinate (right) at (\ex, \ey);
		
		\draw[step=1cm,thick, color=green] (left) -- (right);
		\draw[step=1cm,thick, color=red] (left) -- (p1) -- (p2) -- (p3);
		\draw[step=1cm,thick, color=blue] (p3) -- (q1) -- (q2) -- (right);

		\draw [fill] (left) circle [radius=0.05];
		\draw [fill] (right) circle [radius=0.05];		
		
		\draw [fill] (p1) circle [radius=0.05];		
		\draw [fill] (p2) circle [radius=0.05];	
		\draw [fill] (p3) circle [radius=0.05];	

		\draw [fill] (q1) circle [radius=0.05];		
		\draw [fill] (q2) circle [radius=0.05];	
		
		\pgfmathsetmacro{\vex}{\ex*0.5}	
		\pgfmathsetmacro{\vey}{\ey*0.5}	

		\path (p2) ++(-0.7, -0.0) node {\color{red}$\Fcal$};
		\path (q2) ++(0.6, 0.0) node {\color{blue}$\Dcal$};
		\path (\vex, \vey) ++(0.0, -0.5) node {\color{green}$\Ecal$};
\end{tikzpicture}
\hspace{0.3cm}
\begin{tikzpicture}[scale=0.4]
        \pgfmathsetmacro{\textycoordinate}{5}
		\draw[->, line width=0.6pt] (0, \textycoordinate) -- (1.5,\textycoordinate);
		\draw (0,0) circle [radius=0.00];	
\end{tikzpicture}
\hspace{0.3cm}
\begin{tikzpicture}[scale=0.5]
		\pgfmathsetmacro{\donex}{3}
		\pgfmathsetmacro{\doney}{0}
		\pgfmathsetmacro{\dtwox}{2.5}
		\pgfmathsetmacro{\dtwoy}{-2.5}
		\pgfmathsetmacro{\dthreex}{1}
		\pgfmathsetmacro{\dthreey}{-4.5}
		
		\pgfmathsetmacro{\fonex}{0.5}
		\pgfmathsetmacro{\foney}{3}	
		\pgfmathsetmacro{\ftwox}{1.5}
		\pgfmathsetmacro{\ftwoy}{3}
		\pgfmathsetmacro{\fthreex}{3}
		\pgfmathsetmacro{\fthreey}{2}
		
		\pgfmathsetmacro{\ex}{\donex+\dtwox+\dthreex+\fonex+\ftwox+\fthreex}
		\pgfmathsetmacro{\ey}{\doney+\dtwoy+\dthreey+\foney+\ftwoy+\fthreey}
				
		\coordinate (left) at (0, 0);
		\coordinate (p1) at (\fonex, \foney);
		\coordinate (p2) at (\fonex+\ftwox, \foney+\ftwoy);
		\coordinate (p3) at (\fonex+\ftwox+\fthreex, \foney+\ftwoy+\fthreey);

		\coordinate (q1) at (\ex-\dtwox-\dthreex, \ey-\dtwoy-\dthreey);
		\coordinate (q2) at (\ex-\dthreex, \ey-\dthreey);
		\coordinate (right) at (\ex, \ey);
		
		\coordinate (r1) at (\fthreex, \fthreey);
		\coordinate (r2) at (\fthreex+\donex, \fthreey+\doney);
		\coordinate (r3) at (\fonex+\fthreex+\donex, \foney+\fthreey+\doney);
		
		\draw[step=1cm,thick, color=green] (left) -- (right);
		\draw[step=1cm,thick,dotted, color=red] (left) -- (p1) -- (p2) -- (p3);
		\draw[step=1cm,thick,dotted, color=blue] (p3) -- (q1) -- (q2) -- (right);
		\draw[step=1cm,thick, color=blue] (q1) -- (q2) -- (right);
		\draw[step=1cm,thick, color=orange] (left) -- (r1);
		\draw[step=1cm,thick, color=violet] (r1) -- (r2);
		\draw[step=1cm,thick, color=red] (r2) -- (r3) -- (q1);
		\draw[step=1cm,thick, color=teal] (r2) -- (right);

		\draw [fill] (left) circle [radius=0.05];
		\draw [fill] (right) circle [radius=0.05];		
		
		\draw [fill] (p1) circle [radius=0.05];		
		\draw [fill] (p2) circle [radius=0.05];	
		\draw [fill] (p3) circle [radius=0.05];	

		\draw [fill] (q1) circle [radius=0.05];		
		\draw [fill] (q2) circle [radius=0.05];	
		
		\draw [fill] (r1) circle [radius=0.05];		
		\draw [fill] (r2) circle [radius=0.05];	
		\draw [fill] (r3) circle [radius=0.05];		

		\pgfmathsetmacro{\vdprimex}{\fthreex+\donex*0.5}	
		\pgfmathsetmacro{\vdprimey}{\fthreey+\doney*0.5}		
		\pgfmathsetmacro{\veprimex}{(\fthreex+\donex+\ex)*0.5}	
		\pgfmathsetmacro{\veprimey}{(\fthreey+\doney+\ey)*0.5}	
		\pgfmathsetmacro{\vfprimex}{\fthreex*0.5}	
		\pgfmathsetmacro{\vfprimey}{\fthreey*0.5}			
		\pgfmathsetmacro{\vex}{\ex*0.5}	
		\pgfmathsetmacro{\vey}{\ey*0.5}	
		
		\path (r3) ++(-0.5, 0) node {\color{red}$\genred{\Fcal}$};
		\path (q2) ++(0.6, 0.0) node {\color{blue}$\genred{\Dcal}$};
		\path (\vex, \vey) ++(0.0, -0.5) node {\color{green}$\Ecal$};
		\path (\vdprimex, \vdprimey) ++(0.0, 0.5) node {\color{violet}$\Dcal'$};
		\path (\veprimex, \veprimey) ++(0.0, 0.5) node {\color{teal}$\Ecal'$};
		\path (\vfprimex, \vfprimey) ++(0.0, 0.5) node {\color{orange}$\Fcal'$};
\end{tikzpicture}
\caption{Construction of $\genred{\Dcal}$, $\Dcal'$, $\Ecal'$, $\genred{\Fcal}$, and $\Fcal'$}\label{induction illustration}
\end{figure}

By construction, we have
\[ \HN(\Ecal) \leq \HN(\Dcal' \oplus \Ecal' \oplus \Fcal') \quad \text{ and } \quad \mumax(\genred{\Dcal})<\mu(\Ecal') \leq \mu(\Ecal) \leq \mumin(\genred{\Fcal}).\]
Then by the induction hypothesis, we obtain short exact sequences
\[0 \longrightarrow \Dcal' \oplus \Ecal' \longrightarrow \Ecal \longrightarrow \Fcal' \longrightarrow 0 \quad \text{ and } \quad 0 \longrightarrow \genred{\Dcal} \longrightarrow \Ecal' \longrightarrow \genred{\Fcal} \longrightarrow 0.\]
These sequences together yield a commutative diagram of short exact sequences
\begin{equation*}
\begin{tikzcd}
0 \arrow[r]& \Dcal' \oplus \genred{\Dcal} \arrow[r, "\sim"]\arrow[d, hookrightarrow]& \Dcal \arrow[r]\arrow[d, "\alpha", hookrightarrow]& 0 \arrow[r]\arrow[d, hookrightarrow]& 0\\
0 \arrow[r]& \Dcal' \oplus \Ecal' \arrow[r]& \Ecal \arrow[r]& \Fcal' \arrow[r]& 0
\end{tikzcd}
\end{equation*}
which, by the snake lemma, gives rise to a short exact sequence
\[0 \longrightarrow \genred{\Fcal} \longrightarrow \coker(\alpha) \longrightarrow \Fcal' \longrightarrow 0.\]
Since this sequence is split by Proposition \ref{basic properties of stable bundles}, we obtain a desired exact sequence \eqref{main theorem short exact sequence}.
\end{proof}

\bibliographystyle{amsalpha}

\bibliography{Bibliography}
	
\end{document}